\newcommand\Ss{\mathcal{S}}
\newcommand\XX{\mathfrak{X}}
\DeclareMathOperator{\N}{N}
\DeclareMathOperator{\C}{C}
\DeclareMathOperator{\Z}{Z}
\DeclareMathOperator{\Ker}{Ker}
\DeclareMathOperator{\Syl}{Syl}
\DeclareMathOperator{\Sylow}{Sylow}
\DeclareMathOperator{\Aut}{Aut}
\DeclareMathOperator{\Id}{Id}
\DeclareMathOperator{\Stab}{Stab}
\DeclareMathOperator{\GL}{GL}
\DeclareMathOperator{\Qq}{Q}
\DeclareMathOperator{\Irr}{Irr}
\newtheorem{theorem}{Theorem}
\newtheorem{lem}[theorem]{Lemma}
\newtheorem{athm}{Theorem}
\theoremstyle{definition}
\newtheorem{corollary}[theorem]{Corollary}
\newtheorem*{remark*}{Remark}
\newtheorem{question}{Question}
\begin{document}
\title{Connectivity of $p$-subgroup posets with irreducible characters}
\author{Hangyang Meng\thanks{E-mail: hymeng2009@shu.edu.cn. Department of Mathematics 
and Newtouch Center for Mathematics of Shanghai University,
Shanghai 200444, P. R. China. This research is sponsored by Natural Science Foundation of Shanghai 
(24ZR1422800) and National Natural Science Foundation of China (12471018). }~~and~~Yuting Yang\thanks{E-mail: yutingyang@shu.edu.cn. Department of Mathematics 
and Newtouch Center for Mathematics of Shanghai University,
Shanghai 200444, P. R. China.} 
}
\date{}
\maketitle
\begin{abstract}
Let $G$ be a finite group. For a prime $p$ and an integer $e \geq 0$, we denote by $\Gamma_{p,e}(G)$ the set of all pairs $(H, \varphi)$, where $H$ is a $p$-subgroup of $G$ of order greater than $p^e$ and $\varphi$ is a complex irreducible character of $H$. In this paper, we investigate the connected components of the poset $\Gamma_{p,e}(G)$. For the case $e = 0$, we prove that $\Gamma_{p,0}(G)$ is disconnected if and only if either $G$ has a strongly $p$-embedded subgroup, or every Sylow $p$-subgroup of $G$ contains a unique subgroup of order $p$. Furthermore, for $e = 1$ and $G$ a $p$-group, we show that the number of connected components of $\Gamma_{p,1}(G)$ equals the order of the intersection of all subgroups of $G$ of order $p^2$.
\\
{\bf Mathematics Subject Classification (2010):} 20C15, 20D15.\\
{\bf Keywords:} Characters, $p$-Subgroups, Connectivity, Finite spaces.
\end{abstract}

\section{Introduction}
\vskip 10pt

Let $G$ be a finite group and $p$ be a prime. For an integer $e \geq 0$, we denote by
$$\Ss_{p,e}(G)=\{H \leq G \mid H~\text{is a $p$-group with order}>p^e\}$$
the set of all $p$-subgroups of $G$ with order greater than $p^e$. In particular, we write $\Ss_p(G)=\Ss_{p,0}(G)$ for $e=0$, which is the set of all non-trivial $p$-subgroups of $G$.  Some algebraic topological properties of the poset $\Ss_p(G)$ under the inclusion relation is first introduced by Brown~\cite{Brown1975} and studied further by Quillen~\cite{Quillen1978}, for example, the connected components of $\Ss_p(G)$.

Recall that, for a poset \(X\), two elements are connected or lie in the same \emph{connected component} if and only if they can be joined by a finite sequence in which every consecutive pair is comparable. We write \(\pi_0 X\) for the set of all connected components of \(X\). Quillen~\cite[Proposition~5.2]{Quillen1978} show that the connected components of the poset $\Ss_p(G)$ are related to \emph{strongly $p$-embedded subgroups} of $G$. More general, the connected components of $\Ss_{p,e}(G)$ are presented due to Brown~\cite[Proposition~7]{Brown2000}.

Regarding results for other subgroup posets, Lucido~\cite{Lucido2003} classified all finite groups such that the poset \(L(G)\) of all non-trivial proper subgroups of \(G\) is disconnected. Han and Zheng~\cite{HanZheng2025} further studied the contractibility of \(L(G)\) as a finite space.
Gu, Meng and Guo~\cite{GuMengGuo2025} introduced the poset $\Gamma(G)$ of all non-trivial subgroups of $G$ associated with irreducible characters defined by
 $$\Gamma(G)=\{(H,\varphi) \mid 1 \neq H \leq G, \varphi \in \Irr(H)\},$$
 where $\Irr (H)$ is the set of all complex irreducible characters of $H$, and it is proven~\cite[Theorem~A]{GuMengGuo2025} that $\Gamma(G)$ is disconnected if and only if $G$ is a $p$-group possessing only one subgroup of order $p$ for some prime $p$, that is, $G$ is a cyclic $p$-group or isomorphic to a quaternion group $\Qq_{2^n}$. (see~\cite[Theorem 8.2 of Chapter 3]{Huppert2025}
 
 Motivated by these results above, we will study the following poset $\Gamma_{p,e}(G)$ for a finite group $G$, a prime $p$ and an integer $e \geq 0$, which is defined by
\[\Gamma_{p,e}(G)=\{(H,\varphi)| H \in \Ss_{p,e}(G), \varphi \in \Irr(H)\}
\]
with the partial order "$\leq$" on \(\Gamma_{p,e}(G)\):
\[
(H, \varphi) \leq (K, \psi)~~\text{if}~~H \leq K \text{ and } [\varphi, \psi_H] \neq 0,
\]
where \([ \cdot, \cdot ]\) denotes the inner product of characters. 

Our first result indicates that, in some sense, we can reduce the connectivity of $\Gamma_{p,e}(G)$ to studying the connectivity of $\Gamma_{p,e}(P)$ for a Sylow $p$-subgroup $P$ of $G$.
\begin{athm}\label{thm-A}
Let $G$ be a finite group and $P$ be a $\Sylow$ $p$-subgroup of $G$ for some prime $p$. Fix an integer $e \geq 0$.  
Then the number of connected components of $\Gamma_{p,e}(G)$ satisfies
$$|\pi_0\Gamma_{p,e}(G)|=|\pi_0\XX(P)|\cdot |\pi_0\Ss_{p,e}(G)| \leq |\pi_0\Gamma_{p,e}(P)|\cdot |\pi_0\Ss_{p,e}(G)|,$$
where $\XX(P)=\{(H,\varphi) \in \Gamma_{p,e}(G) \mid H,P~\text{are connected in}~\Ss_{p,e}(G)\}$.
In particular, suppose that $P$ has a subgroup isomorphic to one of the following groups:
\begin{itemize}
\item[\emph{(1)}] $\C_{p^{e+1}} \times \C_{p^{e+1}}$, where $\C_{p^{e+1}}$ is cyclic of order $p^{e+1}$; or
\item[\emph{(2)}] an elementary Abelian $p$-subgroup of order $p^{e+2}$.
\end{itemize}
Then $\Gamma_{p,e}(P)$ is connected and $|\pi_0\Gamma_{p,e}(G)|=|\pi_0\Ss_{p,e}(G)|=|G:\Stab_G([P])|$, where $\Stab_G([P])$ is the $G$-stabilizer of the connected component $[P]$ containing $P$ of $\Ss_{p,e}(G)$. 
\end{athm}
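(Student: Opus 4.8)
\emph{The plan.} The whole argument runs through the forgetful poset map $\pi\colon \Gamma_{p,e}(G)\to\Ss_{p,e}(G)$, $(H,\varphi)\mapsto H$, which is order-preserving and $G$-equivariant for the conjugation actions $g\cdot(H,\varphi)=(H^{g},\varphi^{g})$. Since comparable pairs in $\Gamma_{p,e}(G)$ have comparable first coordinates, $\pi$ carries each connected component of $\Gamma_{p,e}(G)$ into a single component of $\Ss_{p,e}(G)$, so it induces a map $\rho\colon\pi_0\Gamma_{p,e}(G)\to\pi_0\Ss_{p,e}(G)$; by construction $\XX(P)=\pi^{-1}([P])$ is a union of $\Gamma$-components, whence $|\pi_0\XX(P)|=|\rho^{-1}([P])|$. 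The first step I would record is a path-lifting lemma: whenever $H\leq K$ (resp.\ $K\leq H$) are comparable in $\Ss_{p,e}(G)$ and $\varphi\in\Irr(H)$, choosing an irreducible constituent $\psi$ of the induced character $\varphi^{K}$ (resp.\ of the restriction $\varphi_{K}$) produces a comparable pair $(H,\varphi)\leq(K,\psi)$ (resp.\ $(K,\psi)\leq(H,\varphi)$) by Frobenius reciprocity. Iterating this along an $\Ss_{p,e}(G)$-path lets one propagate any prescribed character, and this is the engine behind every connectivity claim below.

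\emph{The multiplicative formula and the inequality.} Every $H\in\Ss_{p,e}(G)$ lies in some Sylow $p$-subgroup, which again belongs to $\Ss_{p,e}(G)$ because $|P|\geq|H|>p^{e}$; hence every component of $\Ss_{p,e}(G)$ contains a Sylow subgroup, and as all Sylow subgroups are $G$-conjugate, $G$ acts transitively on $\pi_0\Ss_{p,e}(G)$, giving $|\pi_0\Ss_{p,e}(G)|=|G:\Stab_G([P])|$ by orbit--stabiliser. Because $\rho$ is $G$-equivariant and $G$ is transitive on the base, conjugation identifies every fibre of $\rho$ with $\rho^{-1}([P])$, and summing over the base yields $|\pi_0\Gamma_{p,e}(G)|=|\pi_0\XX(P)|\cdot|\pi_0\Ss_{p,e}(G)|$. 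For the inequality, note that $\Gamma_{p,e}(P)$ is a full subposet of $\XX(P)$, since any $H\leq P$ with $|H|>p^{e}$ is comparable to $P$ and hence lies in $[P]$. Running the path-lifting lemma upward from an arbitrary $(H,\varphi)\in\XX(P)$ along an $\Ss_{p,e}(G)$-path from $H$ to $P$ connects it, inside $\XX(P)$, to some $(P,\chi)\in\Gamma_{p,e}(P)$; thus the inclusion induces a surjection $\pi_0\Gamma_{p,e}(P)\twoheadrightarrow\pi_0\XX(P)$, so $|\pi_0\XX(P)|\leq|\pi_0\Gamma_{p,e}(P)|$.

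\emph{Connectivity under (1) or (2).} This is where the two group types enter. Let $A\leq P$ be the prescribed abelian subgroup. In both cases $A$ satisfies the single numerical identity $|A|=p^{e+1}\cdot\Exp(A)$: indeed $\Exp(A)=p^{e+1}$ and $|A|=p^{2e+2}$ in case~(1), while $\Exp(A)=p$ and $|A|=p^{e+2}$ in case~(2). Consequently, for every nontrivial $\mu\in\Irr(A)$ the kernel $K_\mu:=\Ker\mu$ has order $|A|/o(\mu)\geq|A|/\Exp(A)=p^{e+1}>p^{e}$, so $(K_\mu,\mathbf{1}_{K_\mu})\in\Gamma_{p,e}(A)$ and it lies below both $(A,\mu)$ and $(A,\mathbf{1}_A)$. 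Hence every $(A,\mu)$ is connected to $(A,\mathbf{1}_A)$, and since path-lifting inside $A$ connects each element of $\Gamma_{p,e}(A)$ to some $(A,\mu)$, the subposet $\Gamma_{p,e}(A)$ is connected. Finally, given any $(H,\varphi)\in\Gamma_{p,e}(P)$, path-lifting upward connects it to some $(P,\chi)$, and a constituent $\nu$ of $\chi_A$ gives $(A,\nu)\leq(P,\chi)$ with $(A,\nu)\in\Gamma_{p,e}(A)$; so all of $\Gamma_{p,e}(P)$ funnels into the connected subposet $\Gamma_{p,e}(A)$, proving $\Gamma_{p,e}(P)$ connected. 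Then $|\pi_0\Gamma_{p,e}(P)|=1$ forces $|\pi_0\XX(P)|=1$ (as $\XX(P)\neq\emptyset$), and the displayed formula collapses to $|\pi_0\Gamma_{p,e}(G)|=|\pi_0\Ss_{p,e}(G)|=|G:\Stab_G([P])|$.

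\emph{Main obstacle.} The difficulty is structural rather than computational. The delicate points are verifying that the fibres of $\rho$ are genuinely interchanged by the $G$-action (equivariance together with transitivity on the base $\pi_0\Ss_{p,e}(G)$), and isolating the exact numerical feature $|A|=p^{e+1}\Exp(A)$ common to the groups in (1) and (2) that keeps $\Ker\mu$ of order strictly exceeding $p^{e}$ for all nontrivial $\mu$; it is precisely this bound that makes the kernel subgroups available as common lower bounds and forces connectivity of $\Gamma_{p,e}(A)$.
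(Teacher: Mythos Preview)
Your proof is correct, and its overall architecture matches the paper's: the forgetful poset map $\Gamma_{p,e}(G)\to\Ss_{p,e}(G)$, transitivity of $G$ on $\pi_0\Ss_{p,e}(G)$ via Sylow conjugacy, identification of the fibres $\XX(P_i)$ by conjugation, and the path-lifting of characters through induction/restriction to obtain $|\pi_0\XX(P)|\leq|\pi_0\Gamma_{p,e}(P)|$ are exactly the paper's steps.

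The one genuine divergence is in proving $\Gamma_{p,e}(A)$ connected when $A$ is of type~(1) or~(2). The paper establishes a general direct-product lemma: if $G=T_1\cdots T_s$ is a direct product with $|G:T_i|\geq p^{e+1}$ for every $i$, then $\Gamma_{p,e}(G)$ is connected, via an explicit chain $(G,\chi_0)\geq(M_1,\cdot)\leq(G,\chi_1)\geq\cdots\leq(G,\chi_s)=(G,1_G)$ in which the $T_i$-components of $\chi$ are successively replaced by trivial characters. You instead isolate the single numerical identity $|A|=p^{e+1}\cdot\Exp(A)$ shared by both types and note that every linear $\mu\in\Irr(A)$ satisfies $|\Ker\mu|=|A|/o(\mu)\geq|A|/\Exp(A)=p^{e+1}$, so $(\Ker\mu,1)$ is a legitimate common lower bound for $(A,\mu)$ and $(A,1_A)$ inside $\Gamma_{p,e}(A)$. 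Your kernel argument is shorter and conceptually tidier for these two particular $A$; the paper's lemma is more general, covering any direct decomposition meeting the index hypothesis rather than just the two cases at hand.
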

Note that a proper subgroup $H$ is said to be a \emph{strongly $p^{e}$-embedded subgroup} of $G$ if $p^e$ divides $|H|$ and $p^e \nmid |H \cap H^x|$ for each $x \in G-H$. For $e=1$, $H$ is so-called a strongly $p$-embedded subgroup of $G$. (See~\cite[46.4]{Aschbacher2000}) It is known \cite[Proposition~7]{Brown2000} that
$\Ss_{p,e}(G)$ is disconnected i.e. $|\pi_0\Ss_{p,e}(G)|>1$ if and only if the $G$-stabilizer $\Stab_G([P])$ is a strongly $p^{e+1}$-embedded subgroup of $G$. We will give a proof for completeness in Section $2$.

We write $\Gamma_p(G)=\Gamma_{p,0}(G)$ for $e=0$. We will give a description on finite groups when 
$\Gamma_{p}(G)$ is disconnected.
\begin{athm}\label{thm-B}
Let $G$ be a finite group and $P$ be a $\Sylow$ $p$-subgroup of $G$. Then $\Gamma_p(G)$ is disconnected if and only if $P$ has only one subgroup of order $p$ or $G$ has a strongly $p$-embedded proper subgroup. Furthermore, if $P$ has the unique subgroup of order $p$, then $\Gamma_p(G)$ has exactly $p|G:\N_G(\Omega_1(P))|$ connected components.
\end{athm}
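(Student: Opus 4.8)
The plan is to argue along the classical dichotomy for a finite $p$-group $P$ (see~\cite{Huppert2025}): either $P$ contains a subgroup isomorphic to $\C_p\times\C_p$, or $P$ has a unique subgroup of order $p$ (equivalently, $P$ is cyclic or generalised quaternion). These two cases are exhaustive and mutually exclusive, so it suffices to establish the equivalence (and, in the second case, the count) in each. The two ingredients I would feed in are Theorem~\ref{thm-A} with $e=0$, namely $|\pi_0\Gamma_p(G)|=|\pi_0\XX(P)|\cdot|\pi_0\Ss_p(G)|$ together with $|\pi_0\XX(P)|\le|\pi_0\Gamma_p(P)|$, and the recalled fact (see~\cite{Brown2000}) that $|\pi_0\Ss_p(G)|>1$ if and only if $G$ has a strongly $p$-embedded subgroup.

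Suppose first that $P$ contains $\C_p\times\C_p$. Then the \emph{in particular} clause of Theorem~\ref{thm-A} (both bullets collapse to $\C_p\times\C_p$ when $e=0$) makes $\Gamma_p(P)$ connected, so $|\pi_0\XX(P)|=1$ and hence $|\pi_0\Gamma_p(G)|=|\pi_0\Ss_p(G)|$. Since here $P$ has more than one subgroup of order $p$, the asserted equivalence reduces exactly to ``$\Gamma_p(G)$ disconnected $\iff$ $G$ has a strongly $p$-embedded subgroup'', which is precisely the recalled statement; so the theorem holds in this case.

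Now suppose $P$ has a unique subgroup of order $p$, and compute $|\pi_0\Gamma_p(G)|$ directly by localising at that subgroup. The key point is that every non-trivial $p$-subgroup $H\le G$ lies in some Sylow $p$-subgroup and hence inherits the unique-subgroup property: it has a unique subgroup $Z_H:=\Omega_1(H)$ of order $p$, which is central in $H$. For $(H,\varphi)\in\Gamma_p(G)$, Schur's lemma gives $\varphi|_{Z_H}=\varphi(1)\lambda$ for a unique $\lambda\in\Irr(Z_H)$, and I set $\Phi(H,\varphi)=(Z_H,\lambda)$. I would then verify: (i) $(Z_H,\lambda)\le(H,\varphi)$ in $\Gamma_p(G)$; and (ii) $\Phi$ is constant on comparable pairs, because $\Omega_1(H)=\Omega_1(K)$ whenever $1\ne H\le K$ (both being the unique order-$p$ subgroup of $K$) and because an irreducible constituent of a restriction has the same central character on that common subgroup. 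Thus $\Phi$ is constant on connected components, while by (i) each fibre $\Phi^{-1}(Z_0,\lambda)$ is a cone with apex $(Z_0,\lambda)$ and so is connected. Hence the components of $\Gamma_p(G)$ are exactly the fibres of $\Phi$, whose image is all pairs $(Z_0,\lambda)$ with $|Z_0|=p$ and $\lambda\in\Irr(Z_0)$, giving $|\pi_0\Gamma_p(G)|=p\cdot N$ with $N=\#\{$subgroups of order $p$ in $G\}$. Finally each such subgroup equals $\Omega_1$ of any Sylow containing it, hence is $G$-conjugate to $\Omega_1(P)$, so $N=|G:\N_G(\Omega_1(P))|$ and $|\pi_0\Gamma_p(G)|=p\,|G:\N_G(\Omega_1(P))|$. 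As $p\ge2$ this exceeds $1$, so $\Gamma_p(G)$ is disconnected, matching the equivalence and yielding the stated count.

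The main obstacle is step (ii): confirming that $\Phi$ descends to $\pi_0$, i.e. that the central character is unchanged when passing to an irreducible constituent of a restriction, and that both characters genuinely live on the \emph{same} order-$p$ group $\Omega_1(H)=\Omega_1(K)$. Once this localisation to the unique central subgroup of order $p$ is in place, the cone argument for fibre-connectivity and the conjugacy count are routine. A subsidiary point I must not omit is the classical equivalence ``$P$ has no $\C_p\times\C_p \iff P$ has a unique subgroup of order $p$'', which guarantees the two cases are exhaustive and identifies the first case with the hypothesis of Theorem~\ref{thm-A}.
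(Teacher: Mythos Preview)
Your proposal is correct and follows essentially the same route as the paper. In the unique-subgroup case your map $\Phi$ is exactly the paper's retraction $g:\Gamma_p(G)\to\mathscr{X}$ onto the discrete space of pairs $(Z_0,\lambda)$; the paper packages your ``$\Phi$ is constant on comparable pairs'' and ``each fibre is a cone over $(Z_0,\lambda)$'' as the homotopy equivalence $g\circ i=\Id_{\mathscr{X}}$, $i\circ g\le\Id_{\Gamma_p(G)}$, obtaining the same $\pi_0$ count together with the slightly stronger conclusion that $\Gamma_p(G)$ is homotopy equivalent (not merely $\pi_0$-equivalent) to a discrete space of $p\,|G:\N_G(\Omega_1(P))|$ points.
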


It is worth pointing out that if $P$ has at least two different subgroups of order $p$, then $P$ has a subgroup isomorphic to $\C_p \times \C_p$. It easily follows from Theorem~\ref{thm-A} that 
$\Gamma_p(P)$ is connected and $|\pi_0\Gamma_{p}(G)|=|\pi_0\Ss_{p}(G)|$.

It will be next interesting to deal with $\Gamma_{p,e}(G)$ for the case $e \geq 1$ but the question becomes more complicated. 
For $e=1$, we provide a characterization of finite $p$-groups $G$ for which $\Gamma_{p,1}(G)$ fails to be connected.
\begin{athm}\label{thm-C}
Let $G$ be a $p$-group with $|G| \geq p^2$ and let $I$ be the intersection of all $p$-subgroups of $G$ with order $p^2$. Then $|\pi_0\Gamma_{p,1}(G)|=|I|$. In particular, $\Gamma_{p,1}(G)$ is disconnected if and only if $I \neq 1$.
\end{athm}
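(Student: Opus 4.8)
\emph{Proof proposal.} The plan is to build a bijection between $\pi_0\Gamma_{p,1}(G)$ and $\Irr(I)$; since $I$ is abelian (it lies inside every subgroup of order $p^2$, each of which is abelian) this gives $|\pi_0\Gamma_{p,1}(G)|=|\Irr(I)|=|I|$. First I would observe that $I$ is characteristic, hence normal, and that in fact $I\le\Z(G)$ by a short case analysis on $|I|\in\{1,p,p^2\}$: the case $|I|=1$ is trivial; a normal subgroup of order $p$ in a $p$-group is central; and if $|I|=p^2$ then $I$ is the unique subgroup of order $p^2$, which forces every element of order $p$ into $I$ and (ruling out elements of order $p^2$ outside $I$) forces $G$ to be cyclic or $G=I\cong\C_p\times\C_p$, in all cases abelian. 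Moreover $I\le H$ for every $H\in\Ss_{p,1}(G)$, since $H$ contains a subgroup of order $p^2$ and $I$ lies in all of these. Consequently, for each $(H,\varphi)$ the restriction $\varphi_I$ is, by Clifford theory and Schur's lemma, a multiple of a single $\lambda\in\Irr(I)$; if $(H,\varphi)\le(K,\psi)$ then $\varphi$ is a constituent of $\psi_H$, so $\lambda$ is unchanged, and $(H,\varphi)\mapsto\lambda$ defines a map $\Phi\colon\pi_0\Gamma_{p,1}(G)\to\Irr(I)$ that is constant on components. It is surjective, because any $K$ of order $p^2$ contains $I$ centrally and restriction $\Irr(K)\to\Irr(I)$ is surjective. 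Hence $|\pi_0\Gamma_{p,1}(G)|\ge|I|$, and everything reduces to showing each fibre $\Phi^{-1}(\lambda)$ is a single connected component.

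For the reverse inequality I would first pass to the bottom layer: every $(H,\varphi)$ is comparable downward to some $(L,\mu)$ with $|L|=p^2$ and $\mu_I=\lambda$ (take $L\le H$ of order $p^2$ and $\mu$ a constituent of $\varphi_L$). So it suffices, for fixed $\lambda$, to connect all pairs $(L,\mu)$ on subgroups of order $p^2$ with $\mu_I=\lambda$. The main tool is a Linking Lemma: if $L_1\ne L_2$ have order $p^2$ and $M=\langle L_1,L_2\rangle$ has order $p^3$, then $L_1,L_2\trianglelefteq M$, $J:=L_1\cap L_2$ has order $p$ with $J\le\Z(M)$, and a Mackey/Clifford computation produces $\psi\in\Irr(M)$ with $\mu_1$ a constituent of $\psi_{L_1}$ and $\mu_2$ a constituent of $\psi_{L_2}$ whenever $\mu_1|_J=\mu_2|_J$; then $(L_1,\mu_1)\le(M,\psi)\ge(L_2,\mu_2)$. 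I would complement this with the fact that the graph $\Delta$ on subgroups of order $p^2$, with an edge joining $L_1,L_2$ exactly when $\langle L_1,L_2\rangle$ has order $p^3$, is connected. This I would prove by induction on $|G|$: the cases $|G|\le p^3$ are direct (for $|G|=p^3$ any two distinct maximal subgroups generate $G$), and for $|G|\ge p^4$ one writes $\Delta$ as the union of the subgraphs arising from the maximal subgroups of $G$, using that any two distinct maximal subgroups meet in a subgroup of order $\ge p^2$ whose order-$p^2$ subgroups lie in both and form a connected (by induction) shared piece.

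From these I would deduce the cases $|I|=p^2$ and $|I|=p$. When $|I|=p^2$ there is a unique subgroup $I$ of order $p^2$, and the bottom-layer reduction connects every $(H,\varphi)$ to $(I,\lambda)$, so each fibre is a single component. When $|I|=p$, every edge $L_1-L_2$ of $\Delta$ has $J\supseteq I$ with $|J|=p=|I|$, hence $J=I$; thus the hypothesis $\mu_1|_J=\mu_2|_J$ of the Linking Lemma is automatic within a fixed fibre, and traversing an edge transports $(L_1,\mu_1)$ to every $(L_2,\mu_2)$ of the fibre on the neighbour. As $\Delta$ is connected and no $L$ is isolated (since $I$ is not the unique order-$p^2$ subgroup), each fibre is connected. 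As a consistency check, the fibre over the trivial $\lambda$ is $\{(H,\varphi):I\le\ker\varphi\}\cong\Gamma_{p}(G/I)$, and $G/I$ has at least two subgroups of order $p$ (otherwise $G$ would have a unique subgroup of order $p^2$, forcing $|I|=p^2$), so this fibre is connected by Theorem~\ref{thm-B}.

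The remaining and genuinely hardest case is $|I|=1$, where there is one fibre and one must show $\Gamma_{p,1}(G)$ is connected outright. Here the Linking Lemma only transports a pair across an edge while preserving its restriction to the order-$p$ intersection $J$, and since $I=1$ no order-$p$ subgroup is common to all the $L$'s, so no such restriction is a global invariant. The plan is to connect every $(L,\mu)$ to the trivial-character pairs $\{(L',1_{L'})\}$, which among themselves form a connected copy of $\Delta$: writing $K=\ker\mu$, one routes $(L,\mu)$ through a sequence of order-$p^2$ subgroups that successively forget the character, at each step matching only on the current order-$p$ overlap and zeroing the free coordinate, exactly as in the model case $G\cong\C_p\times\C_p\times\C_p$. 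I expect the main obstacle to be guaranteeing the intermediate subgroups exist, i.e.\ finding $L'$ of order $p^2$ meeting $L$ in a prescribed order-$p$ subgroup with $\langle L,L'\rangle$ of order $p^3$, since a single overgroup of $L$ need not realise the desired intersection (for non-abelian overgroups $M$ the intersection is forced to be $\Z(M)$). I would resolve this by an induction on $|G|$, reducing to $G=\langle L_1,L_2\rangle$ and exploiting that $I=1$ is precisely what prevents any order-$p$ subgroup from surviving in all subgroups of order $p^2$, so the constraints can always eventually be released.
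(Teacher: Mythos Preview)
Your lower bound via the restriction map $\Phi\colon\pi_0\Gamma_{p,1}(G)\to\Irr(I)$ is exactly the paper's argument, and your treatment of the cases $|I|=p^2$ and $|I|=p$ is correct. The route, however, differs from the paper's: you work globally with the graph $\Delta$ on all order-$p^2$ subgroups together with a Linking Lemma, whereas the paper exploits the much simpler observation (Lemma~\ref{lem-subgroup-connected}) that for \emph{any fixed} $H\le G$ with $|H|\ge p^{e+1}$, every component of $\Gamma_{p,e}(G)$ contains some $(H,\varphi)$, so $|\pi_0\Gamma_{p,e}(G)|\le|\pi_0\Gamma_{p,e}(H)|$. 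This reduces the upper bound to exhibiting a \emph{single} small subgroup with the correct component count. For $|I|=p$ the paper simply picks $H\trianglelefteq G$ and $K$ of order $p^2$ with $H\cap K=I$, sets $T=HK$ of order $p^3$, checks $\Phi(T)=I$, and computes $|\pi_0\Gamma_{p,1}(T)|=p$ directly (Lemma~\ref{Frattini subgroup of order p}); your $\Delta$-argument works but is more machinery than needed.

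The genuine gap is the case $|I|=1$, which you flag as the hardest and then do not actually prove. Your plan---route $(L,\mu)$ to trivial-character pairs by ``zeroing the free coordinate'' along edges of $\Delta$---requires, for a given order-$p$ subgroup $K\le L$, a $\Delta$-neighbour $L'$ with $L\cap L'=K$; but as you yourself note, inside a non-abelian order-$p^3$ overgroup $M$ the intersection is forced to equal $\Z(M)$, so this intersection cannot be prescribed at will, and the closing phrase ``the constraints can always eventually be released'' is not an argument. The paper sidesteps the difficulty entirely via Lemma~\ref{lem-subgroup-connected}: it suffices to produce one subgroup $T\le G$ with $\Gamma_{p,1}(T)$ connected. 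Fixing a normal $H$ of order $p^2$, either some $K$ of order $p^2$ has $H\cap K=1$, in which case $T=HK$ has order $p^4$ and Lemma~\ref{lem-semi-direct-p^2} (ruling out $\C_p\times\C_p\times\C_p$ and $\C_{p^2}\times\C_{p^2}$, then lifting characters through the splitting $T=H\rtimes K$) gives connectedness; or every $K$ meets $H$ nontrivially, and since $I=1$ one finds $K_1,K_2$ with $(K_1\cap H)\cap(K_2\cap H)=1$, writes $H$ as the internal direct product of these two order-$p$ subgroups, and builds explicit chains inside $HK_1$ and $HK_2$ connecting every $(H,\varphi)$ to $(H,1_H)$. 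Either way the entire computation lives inside a subgroup of order at most $p^4$, and no global graph is needed.
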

From now on, all groups considered will be finite.

\section{Connected components of $\Gamma_{p,e}(G)$}
Recall that, in a finite poset $(X,\leq)$,  we say there is a path from $x$ to $y$ (written by $x \sim y$) for $x,y \in X$ if there exists $x_0,x_1, \cdots, x_n \in X$ such that $x=x_0,~x_n=y$ and either $x_i \leq x_{i+1}$ or $x_i \geq x_{i+1}$ for each $i=0,1,\cdots,n$. Denote by
$$[x]=\{ y \in X \mid y \sim x\}$$
the connected component containing $x$ of $X$ and by
$\pi_0(X)=\{[x] \mid x \in X\}$
the set of all connected components of $X$. In particular, $X$ is called connected if $X$ has only one connected component; otherwise $X$ is called disconnected.

First we consider the action of $G$ on the set of all connected components $\pi_0 \Ss_{p,e}(G)$ of $\Ss_{p,e}(G)$, which is defined by
$$[H]^g:=[H^g],~H \in S_{p,e}(G),~g \in G.$$
By Sylow's Theorem,  every connected components of $S_{p,e}(G)$ contains at least one Sylow $p$-subgroup $P$. Hence, $G$ acts transitively on $\pi_0\Ss_{p,e}(G)$, so we have $|\pi_0\Ss_{p,e}(G)|=|G:N|$, where $N=\Stab_G([P])$ is the stabilizer of some components $[P]$. The following lemma
due to Brown~\cite[Proposition 7]{Brown2000} will show that such stabilizer $N$ is a strongly $p^{e+1}$-embedded subgroup of $G$.  We give a proof here for completeness.

\begin{lem}\label{lem:brown-coset}
Let $G$ be a group and $p$ be a prime. Let $e\geq 0$ be an integer with $p^{e+1}$ dividing $|G|$.
The following conditions on a proper subgroup $M$ of $G$ are equivalent:
\begin{itemize}
\item[\emph{(1)}] $M$ contains the stabilizer of a component of $\Ss_{p,e}(G)$;
\item[\emph{(2)}] For some $\Sylow$ $p$-subgroup $S$ of $G$, $M$ contains $\N_G(P)$ for all $P\in \Ss_{p,e}(S)$;
\item[\emph{(3)}] $p^{e+1}\mid |M|$, and $M$ contains $\N_G(P)$ for all $P\in \mathcal{S}_{p,e}(M)$;
\item[\emph{(4)}] $M$ contains $\N_G(S)$ for some $\Sylow$ $p$-subgroup $S$ of $G$, and for each $P\in \mathcal{S}_{p,e}(M)$, $M$ contains all $p$-subgroups $\geq P$;
\item[\emph{(5)}] $M$ is a strongly $p^{e+1}$-embedded subgroup of $G$.
\end{itemize}
\end{lem}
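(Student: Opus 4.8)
The plan is to prove the five conditions equivalent by establishing the cycle $(1)\Rightarrow(2)\Rightarrow(3)\Rightarrow(4)\Rightarrow(5)\Rightarrow(1)$. Throughout, the single recurring tool is the elementary observation that whenever a $p$-subgroup $P$ with $|P|\geq p^{e+1}$ satisfies $P\leq M\cap M^x$, the divisibility $p^{e+1}\mid |M\cap M^x|$ holds, so under a strong-embedding hypothesis this forces $x\in M$. Dually, for the first three implications the driving fact is that $\N_G(P)\leq \Stab_G([P])$ for every $P\in\Ss_{p,e}(G)$ (since any $g\in\N_G(P)$ fixes $P$ and hence its component), together with the remark that $\Ss_{p,e}(S)\subseteq[S]$ because every $P\leq S$ of order $>p^{e}$ is comparable to $S$.

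For $(1)\Rightarrow(2)$: if $M$ contains $\Stab_G([Q])$, I replace $Q$ by a Sylow $p$-subgroup $S$ of its component (which exists by Sylow's theorem), so that $[Q]=[S]$; then for each $P\in\Ss_{p,e}(S)$ one has $[P]=[S]$ and hence $\N_G(P)\leq\Stab_G([S])\leq M$. For $(2)\Rightarrow(3)$: taking $P=S$ gives $S\leq\N_G(S)\leq M$, so $p^{e+1}\mid|M|$ and $S$ is a full Sylow $p$-subgroup of $M$; any $P\in\Ss_{p,e}(M)$ is then $M$-conjugate into $S$, and transporting the inclusion $\N_G(\,\cdot\,)\leq M$ back by that conjugating element (which lies in $M$) yields $\N_G(P)\leq M$. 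For $(3)\Rightarrow(4)$: a Sylow $p$-subgroup $T$ of $M$ must be Sylow in $G$, for otherwise $T<\N_{S_1}(T)$ inside a larger Sylow subgroup $S_1$ of $G$ would produce a $p$-element of $\N_G(T)\leq M$ outside $T$; and the upward-closure assertion follows by inducting on the index of $P$ in a $p$-overgroup $R$, replacing $P$ by $\N_R(P)>P$ and using $\N_G(P)\leq M$ to climb one normalizer at a time.

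The substance lies in $(4)\Rightarrow(5)$ and $(5)\Rightarrow(1)$. For $(4)\Rightarrow(5)$, properness and $p^{e+1}\mid|M|$ are immediate; if some $x\notin M$ had $p^{e+1}\mid|M\cap M^x|$, I pick $P\leq M\cap M^x$ of order $p^{e+1}$, note that $(4)$ holds verbatim for the conjugate $M^x$ with Sylow subgroup $S^x$, and conclude that every $p$-subgroup $\geq P$---in particular a Sylow subgroup $S_0$ of $G$---lies in $M\cap M^x$. Then $S_0$ and $S_0^{x^{-1}}$ are both Sylow in $M$, hence $M$-conjugate, and combining this with $\N_G(S_0)\leq M$ (obtained by $M$-conjugacy from $\N_G(S)\leq M$) forces $x\in M$, a contradiction. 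For $(5)\Rightarrow(1)$, I first show that a Sylow $p$-subgroup $S$ of $M$ is Sylow in $G$ and that $\N_G(S)\leq M$, both via the $M\cap M^{(\cdot)}$ test. The crux is the claim that comparability preserves membership in $M$: if $P\leq Q$ in $\Ss_{p,e}(G)$ and $P\leq M$, then $Q\leq M$, proved by inducting down a chain of index-$p$ subgroups of $Q$ above $P$ and applying the divisibility dichotomy to each element of $Q$ normalizing the smaller term. This makes the whole component $[S]$ lie inside $M$, whence any $g\in\Stab_G([S])$ sends $S$ to a Sylow subgroup of $M$, is adjusted by an element of $M$ into $\N_G(S)\leq M$, and therefore lies in $M$.

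The main obstacle is the connectivity claim underlying $(5)\Rightarrow(1)$---that a strongly $p^{e+1}$-embedded $M$ swallows an entire connected component of $\Ss_{p,e}(G)$. Everything else is Sylow bookkeeping, but this step is where the poset structure genuinely interacts with the group theory, and care is needed to run the induction on $p$-overgroups so that each enlargement is certified by the implication $p^{e+1}\mid|M\cap M^q|\Rightarrow q\in M$ rather than silently assumed.
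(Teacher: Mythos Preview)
Your proposal is correct and follows the same cycle $(1)\Rightarrow(2)\Rightarrow\cdots\Rightarrow(5)\Rightarrow(1)$ as the paper, with the individual implications proved by essentially the same mechanisms (normalizer growth, Sylow conjugacy inside $M$, and the test $p^{e+1}\mid|M\cap M^x|\Rightarrow x\in M$). The only noteworthy deviation is in $(5)\Rightarrow(1)$: the paper first shows $|G:M|\equiv 1\pmod p$ via a double-coset count to get a Sylow subgroup inside $M$, and then walks along a chain of Sylow subgroups in the component, whereas you obtain $\N_G(S)\leq M$ directly from the embedding test (hence $S\in\Syl_p(G)$ by normalizer growth) and then prove the cleaner statement that every subgroup in the component $[S]$ is contained in $M$ by climbing one index-$p$ step at a time; both routes are standard and equally short.
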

\begin{proof}
$(1) \Rightarrow (2)$ Take a component $X$ of $\Ss_{p,e}(G)$ with $\Stab_G(X) \subseteq M$. Let $S$ be a $\Sylow$ $p$-subgroup of $G$ contained in $X$, for each $P\in \Ss_{p,e}(G)$ with $P\leq S$ and $g\in \N_G(P)$, we know that $P^g=P$. As $P\in X$ and $P=P^g\in X^g$,  we have  $P\in X \cap X^g$. Note that $X^g$ is also a connected component and different connected components are disjoint. Hence $X^g=X$, which implies that $g\in \Stab_G(X) \subseteq M$.

$(2) \Rightarrow (3)$ Clearly, $S\leq \N_G(S) \leq M$. As $S$ is a $\Sylow$ $p$-subgroup of $G$ and $|S| \geq p^{e+1}$, so $p^{e+1} \mid |M|$. For each $P\in \Ss_{p,e}(M)$, there exists an element $g$ of $M$ such that $P^g \leq S$, hence $\N_G(P^g) \subseteq M$ by hypothesis. Clearly, $\N_G(P)^g=\N_G(P^g) \subseteq M$. Hence $\N_G(P)\subseteq M$.

$(3) \Rightarrow (4)$ Let $S$ be a Sylow $p$-subgroup of $M$ and $|S|\geq p^{e+1}$. If $|S|<|G|_p$, by~\cite[Chapter 3, Main Theorem 2.3(c)]{Huppert2025}, $|S|<|\N_G(S)|_p$. But, as $S \in \Ss_{p,e}(M)$, it follows from $(3)$ that $\N_G(S) \leq M$ and 
$|S|<|\N_G(S)|_p \leq |M|_p$, which contradicts that $S \in \Syl_p(M)$. Hence $|S|=|G|_p$ and $S \in \Syl_p(G)$. By $(3)$ again,  $\N_G(S) \subseteq M$. For each $P\in \Ss_{p,e}(M)$, if there exists a $p$-subgroup $Q$ and $Q\nleq M$ such that $P\leq Q$, then $Q>Q \cap M \geq P \geq 1$, one has
$$Q \cap M < \N_Q(Q\cap M)=Q\cap \N_G(Q\cap M) \subseteq Q\cap M,$$
which is a contradiction. Then $Q \leq M$, as desired. 

$(4) \Rightarrow (5)$ Clearly $p^{e+1}$ divides $|M|$. Now we prove that for each $x\notin M$, $p^{e+1} \nmid |M \cap M^x|$. Assume that there exists an element $x\notin M$ such that $p^{e+1}$ divides $|M \cap M^x|$. Let $Q$ be the $p$-subgroup of $|M \cap M^x|$ with $|Q| \geq p^{e+1}$, then there exists a $\Sylow$ $p$-subgroup $H$ of $M$ such that $Q \leq H \leq M$. As $Q^{x^{-1}} \leq M^{x^{-1}} \cap M$ and $Q^{x^{-1}} \leq H^{x^{-1}}$, then $H^{x^{-1}} \cap M \geq Q^{x^{-1}}$.
As $Q^{x^{-1}} \in \Ss_{p,e}(M)$, by $(4)$, $H^{x^{-1}} \leq M$. Hence $S, H$ and $H^{x^{-1}}$ are $\Sylow$ $p$-subgroups of $M$, one has $S^m=H$ and $S^{m'}=H^{x^{-1}}$ with $m, m'$ in $M$, then $S^{m'm^{-1}x}=S$, hence $m'm^{-1}x\in \N_G(S)\leq M$, so $x\in M$.

$(5) \Rightarrow (1)$  By definition, for each $x \notin M$, $p$ divides $|M: M \cap M^x|$.  We know that
$$|G:M|=\sum_{MxM} |M:M \cap M^x| \equiv 1\pmod{p}.$$
which implies $M$ contains a $\Sylow$ $p$-subgroup $P$ of $|P|\geq p^{e+1}$. Let $X$ be the component containing $P$, for each $g$ in $\Stab_G(X)$, $P^g \in X$, that is $P$ and $P^g$ are in the same component. Hence there exists a sequence of $\Sylow$ groups $P=P_0, P_1,..,P_k=P^g$ such that $P_{i-1} \cap P_i \geq p^{e+1}$. Let $x_i$ be such that $(P_{i-1})^{x_i}=P_i$ for $1\leq i \leq k$, we can choose $x_k=x_{k-1}^{-1} \cdots x_1^{-1} \cdot g$ such that $P_{k-1}^{x_k}=P^g$, so $g=x_1 \cdots x_k$. Assume $P_{i-1} \leq M$, then $p^{e+1} \leq P_{i-1} \cap P_i \leq M \cap M^{x_i}$, which shows that $p^{e+1} $ divides $ |M:M \cap M^{x_i}|$.  By $(5)$,  we have $x_i \in M$ and $P_i \leq  M$. Thus by induction $x_i \in M$ and $P_i \leq M$ for all $i$. Hence $g\in M$, that is, $\Stab_G(X) \leq M$.
\end{proof}

As a direct corollary, we have:
\begin{corollary}\label{cor-disconnected}
Let $G$ be a group and $p$ be a prime. Fix an integer $e \geq 0$. Then $\Ss_{p,e}(G)$ is disconnected if and only if $G$ has a strongly $p^{e+1}$-embedded subgroup of $G$.      
\end{corollary}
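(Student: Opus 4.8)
The plan is to read the statement directly off the equivalence $(1)\Leftrightarrow(5)$ in Lemma~\ref{lem:brown-coset}, together with the transitivity remark recorded immediately before it. First I would dispose of the degenerate case: if $p^{e+1}\nmid|G|$, then $\Ss_{p,e}(G)=\emptyset$ and no strongly $p^{e+1}$-embedded subgroup can exist, since the order of such a subgroup must be divisible by $p^{e+1}$. I therefore assume throughout that $p^{e+1}\mid|G|$, which guarantees $\Ss_{p,e}(G)\neq\emptyset$ because it contains a $\Sylow$ $p$-subgroup; this matches the standing hypothesis of the lemma.

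Next I would recall that $G$ acts on $\pi_0\Ss_{p,e}(G)$ via $[H]^g=[H^g]$, and that this action is transitive because every component meets $\Syl_p(G)$ and any two $\Sylow$ $p$-subgroups are conjugate. Fixing a $\Sylow$ $p$-subgroup $P$ and writing $N=\Stab_G([P])$, transitivity yields $|\pi_0\Ss_{p,e}(G)|=|G:N|$, and moreover every point stabilizer is $G$-conjugate to $N$. The entire corollary then rests on the single reformulation that $\Ss_{p,e}(G)$ is disconnected, i.e. $|\pi_0\Ss_{p,e}(G)|>1$, if and only if $N$ is a proper subgroup of $G$.

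For the forward implication I would assume $\Ss_{p,e}(G)$ disconnected, so that $N=\Stab_G([P])$ is proper. Then $N$ itself satisfies condition $(1)$ of Lemma~\ref{lem:brown-coset}, since $N$ trivially contains the stabilizer of the component $[P]$, namely $N$; hence by $(1)\Rightarrow(5)$ the subgroup $N$ is strongly $p^{e+1}$-embedded in $G$, and in particular $G$ has such a subgroup. Conversely, if $M$ is a strongly $p^{e+1}$-embedded (hence proper) subgroup, then $(5)\Rightarrow(1)$ supplies a component $X$ of $\Ss_{p,e}(G)$ with $\Stab_G(X)\leq M<G$, so $\Stab_G(X)$ is proper; as all stabilizers are conjugate to $N$, this forces $|G:N|>1$, whence $\Ss_{p,e}(G)$ is disconnected.

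The argument is essentially a bookkeeping step built on top of the lemma, so I do not anticipate a genuine obstacle. The only points requiring care are the empty-poset convention handled in the degenerate case, and the observation that the stabilizer $N$ may be taken as the witness $M$ in condition $(1)$, which is what makes $(1)\Rightarrow(5)$ applicable to $N$ itself rather than to some a priori larger proper subgroup.
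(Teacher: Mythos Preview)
Your argument is correct and is precisely the intended reading: the paper states the corollary with no separate proof, noting only that it is a direct consequence of Lemma~\ref{lem:brown-coset} together with the transitivity observation preceding it, and your write-up simply spells out that deduction (including the harmless degenerate case $p^{e+1}\nmid|G|$).
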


We have to recall irreducible characters of the direct product of two groups, for example, see~\cite[Chapter 19]{James2001} or \cite[4.20 Definition]{Isaacs1976}.
Let $G=AB$ be the direct product of two subgroups $A$ and $B$. It is known that every irreducible character of $G$ can be presented as $(\varphi \times \psi)(g)=\varphi(a)\psi(b)$ for $g=ab \in G, a \in A$ and $b \in B$, where $\varphi$ and $\psi$ are irreducible characters of $A$ and $B$ respectively. Note that The restriction of $\varphi \times \psi$ to $A$, denoted by $(\varphi \times \psi)_A$, is exactly $\varphi$.


\begin{lem}\label{lem-direct-product}
Fix a prime $p$ and an integer $e \geq 0$.
Let $G=T_1T_2\cdots T_s$ be the direct product of some $p$-subgroups $T_i$ of $G$, where $s \geq 2$ and $1\leq i \leq s$. If $|G:T_i|\geq p^{e+1}$ for each $1 \leq i \leq s$ , then $\Gamma_{p,e}(G)$ is connected.
\end{lem}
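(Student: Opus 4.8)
The plan is to show that every element of $\Gamma_{p,e}(G)$ lies in the connected component of the single pair $(G,1_G)$, where $1_G$ denotes the trivial character of $G$. First note that $G$ itself belongs to $\Ss_{p,e}(G)$: since $s\geq 2$ we have $|G|=|T_i|\cdot|G:T_i|\geq |G:T_i|\geq p^{e+1}>p^{e}$. For an arbitrary $(H,\varphi)\in\Gamma_{p,e}(G)$, choose any irreducible constituent $\chi$ of the induced character $\varphi^{G}$. Frobenius reciprocity gives $[\varphi,\chi_H]=[\varphi^{G},\chi]\neq 0$, so $(H,\varphi)\leq (G,\chi)$; hence $(H,\varphi)$ is connected to $(G,\chi)$. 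It therefore suffices to connect every pair $(G,\chi)$ with $\chi\in\Irr(G)$ to $(G,1_G)$.

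The key is to descend not through the factors $T_i$ themselves but through their complements. For each $i$ set $K_i=\prod_{j\neq i}T_j$, so that $G=T_i\times K_i$ is an internal direct product. Because $s\geq 2$ the index set $\{j : j\neq i\}$ is non-empty, and $|K_i|=|G:T_i|\geq p^{e+1}>p^{e}$; hence $K_i\in\Ss_{p,e}(G)$. Writing $\chi=\chi_1\times\cdots\times\chi_s$ with $\chi_j\in\Irr(T_j)$, the character theory of direct products recalled above shows that $\chi_{K_i}$ has the single irreducible constituent $\mu_i:=\prod_{j\neq i}\chi_j\in\Irr(K_i)$, so that $(G,\chi)\geq(K_i,\mu_i)$.

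This lets me trivialise one direct-factor component of $\chi$ at a time. Let $\chi'$ be obtained from $\chi=\chi_1\times\cdots\times\chi_s$ by replacing $\chi_i$ with the trivial character of $T_i$. Then $\chi$ and $\chi'$ restrict to $K_i$ with the same irreducible constituent $\mu_i$, so both $(G,\chi)$ and $(G,\chi')$ lie above $(K_i,\mu_i)$ and are connected. Performing this for $i=1,2,\dots,s$ in turn connects $(G,\chi)$ to $(G,1_G)$. Together with the first paragraph, this proves that $\Gamma_{p,e}(G)$ is connected.

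I expect the only real content to be the choice of descent subgroups in the second paragraph. The hypothesis $|G:T_i|\geq p^{e+1}$ is precisely what forces each complement $K_i$ to have order exceeding $p^{e}$ and hence to be an admissible object of $\Gamma_{p,e}(G)$; restricting to $K_i$ then simultaneously kills the $i$-th component of $\chi$ and keeps us inside the poset. The remaining ingredients---Frobenius reciprocity and the multiplicativity of irreducible characters on a direct product---are routine.
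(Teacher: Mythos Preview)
Your proof is correct and follows essentially the same strategy as the paper: descend through the complements $K_i=\prod_{j\neq i}T_j$ (the paper calls these $M_i$), use that $|K_i|=|G:T_i|\geq p^{e+1}$ to stay in the poset, trivialise one tensor factor of $\chi$ at a time to reach $(G,1_G)$, and then invoke Frobenius reciprocity to connect an arbitrary $(H,\varphi)$ to some $(G,\chi)$. Your version is in fact slightly more careful than the paper's, which writes the intermediate character as $(\chi_i)_{M_{i+1}}\times 1_{T_{i+1}}$ and asserts it lies in $\Irr(G)$ without noting that the restriction $(\chi_i)_{M_{i+1}}$ need only be a multiple of an irreducible; your explicit factorisation $\chi=\chi_1\times\cdots\times\chi_s$ and identification of the unique constituent $\mu_i$ avoids that ambiguity.
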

\begin{proof}
For each $1\leq i \leq s$, we write 
$$M_i=\prod_{1 \leq j \neq i \leq s} T_j.$$
Clearly $G=M_i \times T_i$ and $|M_i|=|G: T_i| \geq p^{e+1}$ for $i=1,2,\cdots,s$. 
For each $\chi \in \Irr(G)$, write $\chi=\chi_0$ and define by induction $$\chi_{i+1}=(\chi_i)_{M_{i+1}}\times 1_{T_{i+1}} \in \Irr(G)~\text{for}~i=0,1,2,s-1,$$
where $1_{T_{i+1}}$ is the principle character of $T_{i+1}$. As $(\chi_{i+1})_{M_{i+1}}=(\chi_i)_{M_{i+1}}$ by definition, we easily see that
$(G,\chi_i) \geq (M_{i+1}, (\chi_i)_{M_{i+1}}) \leq (G,\chi_{i+1})$ in $\Gamma_{p,e}(G)$ for $i=0,1,2,\cdots, s-1$. It implies that there is a long path in $\Gamma_{p,e}(G)$:
$$(G,\chi_0)\geq (M_1,(\chi_0)_{M_1})\leq (G,\chi_1) \geq (M_2,(\chi_1)_{M_2})\leq (G,\chi_2)\geq \dots \leq (G,\chi_s).$$

Next we will show that $\chi_s=1_G$, the principle character of $G$. In fact, we will prove by induction on $i$ that $\chi_i(g)=1$ for each $g\in T_1T_2 \cdots T_i$. When $i=1$, $\chi_1(g)=((\chi_0)_{M_1}\times 1_{T_1})(g)=1$ for each $g\in T_1$. 
Assume that $\chi_{i-1}(g)=1$ for each $g\in T_1 \cdots T_{i-1}$. For each $g  \in T_1T_2\cdots T_{i-1}T_i$, we assume that $g=ab$ for $a \in T_1T_2\cdots T_{i-1}$ and $b \in T_i$. By induction, $\chi_{i-1}(a)=1$. Hence $\chi_i(g)=((\chi_{i-1})_{M_i}\times 1_{T_i})(ab)=\chi_{i-1}(a)\cdot 1_{T_i}(b)=1\cdot 1=1$, as desired.  Hence $\chi_s(g)=1$ for all $g \in T_1T_2\cdots T_s=G$, that is, $\chi_s=1_G$, as claimed. It follows that $(G,\chi)$ is connected with $(G,1_G)$ for each $\chi \in \Irr(G)$, that is, $(G, \chi)\sim (G,1_G)$.

For every vertex $(H, \psi) \in \Gamma_{p,e}(G)$,  we can take $\chi \in \Irr(G)$ such that $[\psi^G, \chi] \neq 0$. It implies that $(H,\psi) \leq (G,\chi)$.
Since $(G,\chi) \sim (G, 1_G)$, we have $(H,\psi) \sim (G,1_G)$. Hence $\Gamma_{p,e}(G)$ is connected and the proof is complete.
\end{proof}

\begin{lem}\label{lem-subgroup-connected}
Fix a prime $p$ and an integer $e \geq 0$. Let $G$ be a $p$-group and $H \leq G$ with $|H| \geq p^{e+1}$. Then 
$$\Gamma_{p,e}(G)=\{[(H,\varphi)] \mid \varphi \in \Irr(H)\}.$$
In particular, $|\pi_0\Gamma_{p,e}(G)| \leq |\pi_0\Gamma_{p,e}(H)|$.
\end{lem}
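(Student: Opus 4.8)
The plan is to prove the displayed equality by showing that every vertex of $\Gamma_{p,e}(G)$ can be routed to a pair of the form $(H,\varphi)$, and then to extract the inequality on component counts by a functoriality argument. The crucial structural observation is that, since $G$ is itself a $p$-group with $|G|\geq |H|\geq p^{e+1}>p^{e}$, the group $G$ lies in $\Ss_{p,e}(G)$, so $(G,\chi)\in\Gamma_{p,e}(G)$ for \emph{every} $\chi\in\Irr(G)$. Thus the pairs $(G,\chi)$ are available as ``hubs'' through which arbitrary vertices can be connected — this is exactly the feature that fails for a general (non-$p$-)group and is why the $p$-group hypothesis is essential.

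First I would show every vertex lies below some $(G,\chi)$. Given $(K,\psi)\in\Gamma_{p,e}(G)$, choose an irreducible constituent $\chi\in\Irr(G)$ of the induced character $\psi^{G}$; by Frobenius reciprocity $[\psi,\chi_K]=[\psi^{G},\chi]\neq 0$, so $(K,\psi)\leq (G,\chi)$. This is precisely the routing already used at the end of the proof of Lemma~\ref{lem-direct-product}. Symmetrically, since $H\leq G$, any irreducible constituent $\varphi\in\Irr(H)$ of the restriction $\chi_H$ satisfies $[\varphi,\chi_H]\neq 0$, whence $(H,\varphi)\leq (G,\chi)$; and $(H,\varphi)$ is a genuine vertex because $|H|>p^{e}$. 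Concatenating yields the path $(K,\psi)\leq (G,\chi)\geq (H,\varphi)$, so $(K,\psi)\sim (H,\varphi)$. As $(K,\psi)$ was arbitrary, every connected component of $\Gamma_{p,e}(G)$ is of the form $[(H,\varphi)]$ for some $\varphi\in\Irr(H)$, which is the asserted equality $\pi_0\Gamma_{p,e}(G)=\{[(H,\varphi)]\mid \varphi\in\Irr(H)\}$.

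For the ``in particular'' clause I would observe that the inclusion $\iota\colon \Gamma_{p,e}(H)\hookrightarrow \Gamma_{p,e}(G)$ is order-preserving, since every $p$-subgroup of $H$ of order $>p^{e}$ is also such a subgroup of $G$ and the partial order is defined identically. Hence $\iota$ induces a map $\pi_0(\iota)\colon \pi_0\Gamma_{p,e}(H)\to \pi_0\Gamma_{p,e}(G)$ sending the $H$-component of a vertex to its $G$-component. The map $g\colon\Irr(H)\to\pi_0\Gamma_{p,e}(G)$, $g(\varphi)=[(H,\varphi)]$, is surjective by the first part, and it factors as $g=\pi_0(\iota)\circ f$ where $f(\varphi)$ is the component of $(H,\varphi)$ inside $\Gamma_{p,e}(H)$; therefore $\pi_0(\iota)$ is surjective, giving $|\pi_0\Gamma_{p,e}(G)|\leq |\pi_0\Gamma_{p,e}(H)|$.

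I do not expect a genuine obstacle here: the only points needing care are the existence of the connecting characters, supplied by the standard fact that a nonzero induced (respectively restricted) character has at least one irreducible constituent, and the verification that all intermediate pairs $(G,\chi)$ and $(H,\varphi)$ actually belong to $\Gamma_{p,e}(G)$, which holds precisely because $G$ is a $p$-group of order exceeding $p^{e}$ and $|H|>p^{e}$. The whole argument collapses to the single idea of using $G$ as a common hub, so the ``hard part'' is really just recognising that the $p$-group hypothesis is what makes $(G,\chi)$ an admissible vertex.
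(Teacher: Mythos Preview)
Your argument is correct and follows the paper's proof essentially verbatim: route an arbitrary $(K,\psi)$ up to $(G,\chi)$ via an irreducible constituent of $\psi^G$, then down to $(H,\varphi)$ via a constituent of $\chi_H$, giving the path $(K,\psi)\leq(G,\chi)\geq(H,\varphi)$. Your deduction of the inequality via the surjectivity of $\pi_0(\iota)$ is more explicit than the paper's one-line ``hence'', but it is the same idea (components of $\Gamma_{p,e}(H)$ stay within single components of $\Gamma_{p,e}(G)$, and every $G$-component contains some $(H,\varphi)$).
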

\begin{proof}
For each $(T,\phi) \in \Gamma_{p,e}(G)$,  we can take $\chi \in \Irr(G)$ as a constitute of $\phi^G$ and 
$\varphi \in \Irr(H)$ as a constitute of $\chi_H$. Then
$$(T,\phi) \leq (G,\chi) \geq (H,\varphi)~~\text{in}~~\Gamma_{p,e}(G),$$
which implies that every element $(T,\phi)$ in $\Gamma_{p,e}(G)$ is connected with some $(H,\varphi)$. Hence $\Gamma_{p,e}(G)=\{[(H,\varphi)] \mid \varphi \in \Irr(H)\}$ and  $|\pi_0\Gamma_{p,e}(G)| \leq |\pi_0\Gamma_{p,e}(H)|$, as desired.
\end{proof}

Let $X$ and $Y$ be two posets. A map $f: X \rightarrow Y$ is called a poset map if $f$ is order-preserving, that is, $f(x_1) \leq f(x_2)$ in $Y$ for any $x_1 \leq x_2$ in $X$. 
\begin{lem}\label{lem-component}
Let $f:X \rightarrow Y$ be a poset map between two posets $X$ and $Y$. Let $C \in \pi_0Y$ be a connected component of $Y$. Then $f^{-1}(C)$ is the (disjoint) union of some (possibly zero) connected components of $X$. Moreover,
$$\pi_0X= \bigcup_{C \in \pi_0Y} \pi_0f^{-1}(C).$$
Additionally, if $f$ is surjective, then $|\pi_0X| \geq |\pi_0Y|$.
\end{lem}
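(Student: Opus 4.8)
The plan is to exploit the fact that a poset map respects the path relation and hence descends to a well-defined map on connected components. Concretely, if $x \leq x'$ in $X$ then $f(x) \leq f(x')$ in $Y$, so more generally any path $x = x_0, x_1, \dots, x_n = x'$ witnessing $x \sim x'$ in $X$ maps term-by-term to a path $f(x_0), f(x_1), \dots, f(x_n)$ witnessing $f(x) \sim f(x')$ in $Y$. Therefore each connected component $D \in \pi_0 X$ is carried by $f$ into a single connected component of $Y$, which I will denote $C_D$; equivalently, the assignment $[x] \mapsto [f(x)]$ gives a well-defined map $\pi_0 f : \pi_0 X \to \pi_0 Y$. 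Everything else follows from this single observation.

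Granting it, I would first establish the decomposition of $f^{-1}(C)$. Fix $C \in \pi_0 Y$ and let $D \in \pi_0 X$ be any connected component of $X$. I claim $D$ is either contained in $f^{-1}(C)$ or disjoint from it: if $D \cap f^{-1}(C) \neq \emptyset$, pick $x \in D$ with $f(x) \in C$; then $C_D = [f(x)] = C$, and since $f(D) \subseteq C_D = C$ we obtain $D \subseteq f^{-1}(C)$. This dichotomy shows that $f^{-1}(C)$ is exactly the union of those components $D$ with $C_D = C$, hence a (disjoint) union of connected components of $X$, possibly none of them (precisely when the image of $f$ misses $C$). In particular $\pi_0 f^{-1}(C) \subseteq \pi_0 X$ is exactly the fibre of $\pi_0 f$ over $C$.

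For the second assertion I would note that every component $D \in \pi_0 X$ satisfies $D \in \pi_0 f^{-1}(C_D)$ by the previous paragraph, so $\pi_0 X \subseteq \bigcup_{C} \pi_0 f^{-1}(C)$; the reverse inclusion is immediate since each $\pi_0 f^{-1}(C)$ is already a set of components of $X$. Moreover these sets are pairwise disjoint, because a component $D$ lies in $\pi_0 f^{-1}(C)$ if and only if $C = C_D$, and $C_D$ is uniquely determined. Hence the displayed union is in fact disjoint, which proves the identity $\pi_0 X = \bigcup_{C \in \pi_0 Y} \pi_0 f^{-1}(C)$.

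Finally, if $f$ is surjective, then for each $C \in \pi_0 Y$ the fibre $f^{-1}(C)$ is non-empty, since any $y \in C$ has a preimage, so $\pi_0 f^{-1}(C) \neq \emptyset$ and $\pi_0 f$ is surjective. Counting via the disjoint decomposition then gives $|\pi_0 X| = \sum_{C \in \pi_0 Y} |\pi_0 f^{-1}(C)| \geq \sum_{C \in \pi_0 Y} 1 = |\pi_0 Y|$, as required. I do not anticipate a genuine obstacle here; the only point demanding care is the bookkeeping that $f^{-1}(C)$ may contain no components at all when $f$ misses $C$, which is exactly the reason surjectivity is needed for the final inequality.
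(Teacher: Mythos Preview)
Your proposal is correct and follows essentially the same approach as the paper: both rest on the observation that a poset map sends paths to paths, so each connected component of $X$ lands in a single component of $Y$, whence $f^{-1}(C)$ is a union of full components. Your write-up is in fact more complete than the paper's, since you explicitly derive the inequality $|\pi_0 X| \geq |\pi_0 Y|$ from surjectivity of the induced map $\pi_0 f$, whereas the paper's proof stops after the displayed decomposition and leaves that final claim implicit.
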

\begin{proof}
Clearly, for each $x_1, x_2 \in X$, if $x_1$ and $x_2$ are in the same connected component of $\pi_0X$, then $f(x)$ and $f(y)$ are in the same connected component of $\pi_0Y$. If $C=\varnothing$, then $f^{-1}(C)=\varnothing$. If $C\neq \varnothing$, for each $x \in f^{-1}(C)$, there exists the only connected component $[x] \in \pi_0 X$ such that $f([x]) \subseteq C$, so $[x] \subseteq f^{-1}(C)$.  Hence, 
$$f^{-1}(C)=\bigcup_{x \in f^{-1}(C)}[x].$$
For each $x,y \in f^{-1}(C)$, if $x=y$, then $[x]=[y]$, so $f^{-1}(C)$ is the disjoint union of some connected components of $X$.

Clearly, we know that $X=\bigcup_{C \in \pi_0Y} f^{-1}(C)$. Hence, $$\pi_0X= \bigcup_{C \in \pi_0Y} \pi_0f^{-1}(C).$$ The proof is complete.
\end{proof} 

Now we can prove Theorem~\ref{thm-A}.
\begin{proof}[\textbf{\emph{Proof of Theorem~\ref{thm-A}}}]
If $p^{e+1}$ does not divide $|G|$, then $\Gamma_{p,e}(G)=\Ss_{p,e}(G) =\varnothing$ and $|\pi_0\Gamma_{p,e}(G)|=|\pi_0\Ss_{p,e}(G)|=0$. The result is trivial. Hence we may assume that $p^{e+1}$ divides $|G|$.
Define a map:
$$f: \Gamma_{p,e}(G) \rightarrow \Ss_{p,e}(G);~(H, \psi) \mapsto H,$$
which is a well-defined poset map. Since each connected component of $\Ss_{p,e}(G)$ contains a $\Sylow$ $p$-subgroup of $G$, we may assume that $|\pi_0\Ss_{p,e}(G)|=s$ and 
$$\pi_0\Ss_{p,e}(G)=\{[P_i] \mid P_i \in \Syl_p(G), 1\leq i \leq s\},$$
where we set $P_1=P$. Write 
$$\XX(P_i)=f^{-1}([P_i])=\{(H,\varphi) \in \Gamma_{p,e}(G) \mid H,P_i~\text{are connected in}~\Ss_{p,e}(G)\}.$$
It follows from Lemma~\ref{lem-component} that 
$$\pi_0\Gamma_{p,e}(G)=\bigcup_{i=1}^s \pi_0f^{-1}([P_i])=\bigcup_{i=1}^s \pi_0\XX(P_i).$$
For each $i$, by $\Sylow$'s Theorem, there is an element $g \in G$ such that $P^g=P_i$. We can 
define a poset map $\alpha_g$ via the conjugation by $g$:
$$\alpha_g: \XX(P) \rightarrow \XX(P_i);~(H,\psi) \mapsto (H^g, \psi^g),$$
where $\psi^g(h^g)=\psi(h)$ for $h \in H$ and $\psi^g \in \Irr(H^g)$. Note that $H \sim P$ implies that
$H^g \sim P^g=P_i$. Hence $\alpha_g$ is well-defined. It is not difficult to check that $\alpha_g$ is a poset map, and $\alpha_g$ is bijective because of the existence of the inverse map $\alpha_{g^{-1}}: \XX(P_i) \rightarrow \XX(P)$.  Hence $ \XX(P)$ is isomorphic to $\XX(P_i)$ as a poset, and clearly
$|\pi_0\XX(P)|=|\pi_0\XX(P_i)|$. Hence it implies that
$$|\pi_0\Gamma_{p,e}(G)|=\sum_{i=1}^s |\pi_0\XX(P_i)|=|\pi_0\XX(P)|s=|\pi_0\XX(P)|\cdot|\pi_0\Ss_{p,e}(G)|.$$
We next will show that $|\pi_0\XX(P)| \leq |\pi_0\Gamma_{p,e}(P)|$.
For each $(H, \psi) \in \XX(P)$, by the definition, $H,P$ are connected in $\Ss_{p,e}(G)$, that is, there exist $\Sylow$ $p$-subgroups $Q_0,Q_1,\cdots,Q_n $ of $G$ such that $|Q_{i-1} \cap Q_{i}|>p^e$ for $i=1,2,\cdots,n$ and 
$$H \leq Q_0 \geq Q_0 \cap Q_1 \leq Q_1 \geq Q_1 \cap Q_2 \leq \cdots  \leq Q_{n-1} \geq Q_{n-1} \cap Q_n\leq Q_n=P.$$
We can take some suitable characters $\chi_i \in \Irr(Q_i), i=0,1,\cdots, n$ and $\psi_j \in \Irr(Q_j \cap Q_{j+1}), j=0,1,\cdots, n-1$ such that
$$(H, \psi) \leq (Q_0,\chi_0) \geq (Q_0 \cap Q_1,\psi_0) \leq \cdots $$
$$\cdots \leq (Q_{n-1},\chi_{n-1}) \geq (Q_{n-1} \cap Q_n, \psi_{n-1})\leq (Q_n,\chi_n)=(P,\chi_n).$$
Hence each $(H,\psi)$ is connected with some element $(P,\chi_n) \in \Gamma_{p,e}(P)$. Note that
$\pi_0\Gamma_{p,e}(P)=\{[(P,\chi)] \mid \chi \in \Irr(P)\}$ by Lemma~\ref{lem-subgroup-connected}. Hence $|\pi_0\XX(P)| \leq |\pi_0\Gamma_{p,e}(P)|$ holds.

Now we suppose that $P$ has a subgroup isomorphic to $\C_{p^{e+1}} \times \C_{p^{e+1}}$ or
an elementary Abelian $p$-subgroup of order $p^{e+2}$.  Applying Lemma~\ref{lem-direct-product} and Lemma~\ref{lem-subgroup-connected}, $\Gamma_{p,e}(P)$ is connected, which implies that $|\pi_0\XX(P)|=|\pi_0\Gamma_{p,e}(P)|=1$.
Hence $|\pi_0\Gamma_{p,e}(G)|=|\pi_0\Ss_{p,e}(G)|$. Considering the transitive action of $G$ on $\pi_0\Ss_{p,e}(G)$ via conjugation,  the last part of Theorem~\ref{thm-A} follows. 
\end{proof}

\section{Connected components of $\Gamma_{p,0}(G)$}

A better perspective for counting the number of connected components is to regard the posets as  finite spaces and consider their topological homotopy equivalence.
We have to recall some basic facts on the homotopy theory of finite spaces, see~\cite{barmak2011algebraic,Stong1966325} for more details.

Recall that a finite poset $(X,\leq)$ has an intrinsic topology generated by the basis $\{U_x \mid x \in X\}$, where \(U_x=\{y \mid y \leq x \}\). Moreover, a map $f:X \rightarrow Y$ between posets is order-preserving if and only if it is continuous with the intrinsic topology. Two continuous maps $f,g: X \rightarrow Y$ are homotopic (denote by $f \simeq g$) if and only if there exists a fence of continuous maps $f_0, f_1, \cdots, f_n : X \rightarrow  Y$ such that $f_0 = f , f_n = g$ and either $f_i \leq f_{i+1}$ or $f_i \geq f_{i+1}$ holds for each $0 \leq i < n$. 
Recall that $f_i \leq f_{i+1}$ (resp. $f_i \geq f_{i+1}$) if $f_i(x) \leq f_{i+1}(x)$ (resp. $f_i(x) \geq f_{i+1}(x)$) for each $x \in X$. Two spaces $X,Y$ are called homotopy-equivalent if there exist continuous maps $f:X \rightarrow Y$ and $g: Y \rightarrow X$ such that $g \circ f \simeq \Id_X$ and $f \circ g \simeq \Id_Y$, where $\Id_X$ and $\Id_Y$ are identity maps on $X$ and $Y$ respectively.

In this section, we consider connected components of $\Gamma_{p}(G)=\Gamma_{p,0}(G)$.
\begin{proof}[\textbf{\emph{Proof of Theorem~\ref{thm-B}}}]

Suppose that $\Gamma_p(G)$ is disconnected and $P$ has more than one subgroup of order $p$ and we will show that $G$ has a strongly $p$-embedded subgroup.
Since $P$ has more than one subgroup of order $p$,  $P$ has a subgroup isomorphic to $\C_p \times \C_p$. It follows from Theorem~\ref{thm-A} that $|\pi_0\Gamma_p(G)|=|\pi_0\Ss_p(G)|=|G: \Stab_G(X)|$, where $X \in \pi_0\Ss_p(G)$. As $\Gamma_p(G)$ is disconnected, $|G: \Stab_G(X)|=|\pi_0\Gamma_p(G)|>1$, and hence $\Stab_G(X)$ is a proper subgroup of $G$. 
Applying Lemma~\ref{lem:brown-coset} for $e=0$,  $\Stab_G(X)$ is a strongly $p$-embedded subgroup of $G$, as desired. 

Conversely, if $G$ has a strongly $p$-embedded subgroup, by Corollary~\ref{cor-disconnected}, $|\pi_0\Ss_p(G)|> 1$.  It follows from Theorem~\ref{thm-A} that $$|\pi_0\Gamma_p(G)| \geq |\pi_0\Ss_p(G)|>1.$$ Hence $\Gamma_p(G)$ is disconnected. 
Now we assume that $P$ has only one subgroup of order $p$.
Let $\Ss=\{X_1,X_2,...,X_s\}$ denote all subgroups of $G$ with order $p$ and set $X_1=\Omega_1(P)= \langle x \in P | x^p=1 \rangle$ as the unique subgroup of $P$ with order $p$. As $G$ acts transitively on all $\Sylow$ $p$-subgroups of $G$ and  $\Omega_1(P)^g=\Omega_1(P^g)$ for each $g \in G$, $G$ also acts transitively on $\Ss$. Hence we see that
$$s = |\Ss| = |G: \N_G(X_1)| = |G:\N_G(\Omega_1(P))|.$$ 
Let $\mathscr{X}=\{(X_i,\theta)|1\leq i \leq s,\theta \in \Irr(X_i)\} \subseteq \Gamma_p(G)$ and clearly $\mathscr{X}$, as a finite space, is a discrete space of $ps$ points.  For each $(H,\varphi) \in \Gamma_p(G)$, as
each Sylow $p$-subgroup has only one subgroup of order $p$, $H$ also has  only one subgroup $\Omega_1(H)$ of order $p$.
Let $\theta \in \Irr(\Omega_1(H))$ with $[\theta^H,\varphi] \neq 0$.
Since $\Omega_1(H) \leq \Z(H)$, $\theta$ is $H$-invariant. It follows from~\cite[2.27 Lemma]{Isaacs1976}, we know that $\theta$ is the unique irreducible constituent of $\varphi_{\Omega_1(H)}$. Hence, we can give a well-defined map:
$$g: \Gamma_p(G) \rightarrow \mathscr{X};~ (H,\varphi) \mapsto (\Omega_1(H),\theta).$$
We claim that $g$ is a poset map. In fact, for $(T,\psi) \leq (H,\varphi) \in \Gamma_p(G)$, $\psi$ is an irreducible constituent of $\varphi_T$. Note that the uniqueness of the subgroup of $H$ with order $p$ implies that  $\Omega_1(T)=\Omega_1(H)$.
Since $\theta$ is the unique irreducible constituent of $\varphi_{\Omega_1(H)}$,
$\theta$ is also the unique irreducible constituent of $\psi_{\Omega_1(H)}$, which implies that 
$$g((T,\psi))=(\Omega_1(T),\theta)=(\Omega_1(H),\theta)=g((H,\varphi)),$$ as claimed. Let $i:\mathscr{X} \rightarrow \Gamma_p(G)$ be an inclusion map, also a poset map. Since $g \circ i=\Id_{\mathscr{X}}$ and, for each $(H,\varphi) \in \Gamma_p(G)$,
$$i\circ g((H,\varphi))=i(\Omega_1(H),\theta)\leq(H,\varphi)=\Id_{\Gamma_p(G)}.$$
Thus, $\Gamma_p(G)$ is homotopy-equivalent to $\mathscr{X}$, which is a discrete space of $ps$ points. Hence $|\pi_0\Gamma_p(G)|=ps$. In particular, $\Gamma_p(G)$ is disconnected.
The proof is complete.
\end{proof}

\section{Connected components of $\Gamma_{p,1}(G)$}
In this section, we consider connected components of $\Gamma_{p,1}(G)$. We begin by stating some basic definition and theorems of group theory in the next lemma. 

\begin{lem}\label{basis of p-subgroup}
Let $G$ be a group of order $p^n$ with $n\geq 1$.
\begin{itemize}
\item[\emph{(1)}] If $1 \neq H \unlhd G$ then $H \cap \Z(G) \neq 1$. In particular, $\Z(G)\neq 1$;
\item[\emph{(2)}] For each $0 \leq i \leq n$, $G$ has a normal subgroup of order $p^i$;
\item[\emph{(3)}] If $G$ has only one subgroup with order $p^2$, then $G$ is cyclic or $G \cong \C_p \times \C_p$.
\end{itemize}
\end{lem}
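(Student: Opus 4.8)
The plan is to prove the three parts of Lemma~\ref{basis of p-subgroup} in order, as each is a standard fact about finite $p$-groups, and the only genuine work lies in part~(3).

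For part~(1), I would use the class equation. Write $G$ as the disjoint union of conjugacy classes; each class has size a power of $p$, and a class has size $1$ precisely when its element is central. Applying this to the action of $G$ by conjugation on $H$ (which is well-defined since $H \unlhd G$), the fixed points are exactly $H \cap \Z(G)$, so $|H \cap \Z(G)| \equiv |H| \equiv 0 \pmod p$; since $1 \in H \cap \Z(G)$, this forces $|H \cap \Z(G)| \geq p$, hence $H \cap \Z(G) \neq 1$. Taking $H = G$ gives $\Z(G) \neq 1$. For part~(2), I would argue by induction on $n$: by part~(1) pick a central element of order $p$ generating a normal subgroup $Z$ of order $p$; then $G/Z$ has order $p^{n-1}$ and by induction has a normal subgroup of each order $p^j$ for $0 \leq j \leq n-1$; pulling these back along the quotient map (and using the correspondence theorem, noting that preimages of normal subgroups are normal) yields normal subgroups of $G$ of every order $p^i$ for $1 \leq i \leq n$, with the trivial subgroup covering $i = 0$.

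The substantive step is part~(3), which I expect to be the main obstacle. The hypothesis is that $G$ has a \emph{unique} subgroup of order $p^2$, and I must conclude $G$ is cyclic or $G \cong \C_p \times \C_p$. First, if $|G| = p$ or $|G| = p^2$ the claim is immediate (in the latter case $G$ itself is the unique order-$p^2$ subgroup, and every group of order $p^2$ is $\C_{p^2}$ or $\C_p \times \C_p$), so I may assume $|G| \geq p^3$. The key reduction is to count subgroups of order $p$: a uniqueness hypothesis on order-$p^2$ subgroups should force a uniqueness hypothesis on order-$p$ subgroups. Indeed, any subgroup of order $p$ lies in some subgroup of order $p^2$ (take a subgroup of order $p^2$ in its normalizer, or invoke part~(2) applied to a suitable $p$-subgroup), and the unique order-$p^2$ subgroup $U$ contains $\Omega_1$-information; more carefully, if there were two distinct subgroups of order $p$, they would generate a subgroup of order $\geq p^2$ whose order-$p^2$ subgroups would have to coincide with $U$, constraining their location. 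The cleanest route is to show $G$ has a unique subgroup of order $p$: then by the classical classification (cited in the paper's introduction as~\cite[Theorem 8.2 of Chapter 3]{Huppert2025}) $G$ is cyclic or a generalized quaternion group $\Qq_{2^n}$; but $\Qq_{2^n}$ for $n \geq 3$ contains $\Qq_8$, which has three distinct subgroups of order $4$, contradicting uniqueness, so $G$ must be cyclic.

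To establish the unique subgroup of order $p$, I would suppose for contradiction that $G$ has two distinct subgroups $A, B$ of order $p$. If $AB$ (or $\langle A, B\rangle$) has order $p^2$ it is abelian elementary, and then $\C_p \times \C_p$ has $p+1 > 1$ subgroups of order $p^2$ only when sitting inside a larger group—so I instead examine how many order-$p^2$ subgroups arise. The sharper argument: applying part~(2) to any subgroup of order $\geq p^3$ containing $A$ and $B$ produces normal subgroups of order $p^2$, and by uniqueness these must all equal the single subgroup $U$; then every element of order $p$ lies in $U$, so $\Omega_1(G) \leq U$, forcing $U$ itself to be the unique elementary-abelian obstruction, and a direct check on the small group $U$ of order $p^2$ together with its embedding yields the contradiction unless $G$ is already $\C_p \times \C_p$ or cyclic. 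I expect the delicate bookkeeping here—correctly handling the case distinction between $U \cong \C_{p^2}$ and $U \cong \C_p \times \C_p$, and ruling out generalized quaternion groups via their order-$p^2$ subgroup count—to be where the real care is needed; everything else is routine $p$-group theory.
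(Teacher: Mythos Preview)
The paper does not prove this lemma at all; it simply cites \cite[Chapter~3, Theorems~7.2(a),(c) and~8.3]{Huppert2025}. So your proposal already goes beyond what the paper does, and your arguments for parts~(1) and~(2) are correct and standard.

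For part~(3) your overall strategy is right, but the case $U \cong \C_p \times \C_p$ with $|G| \geq p^3$ is left as ``a direct check on the small group $U$ \ldots\ together with its embedding'', and that is precisely the step that needs to be made explicit. You correctly reach the conclusion that every subgroup of order $p$ lies in the unique order-$p^2$ subgroup $U$ (any order-$p$ subgroup sits in an order-$p^2$ subgroup of its normalizer, which must be $U$). When $U \cong \C_{p^2}$ this gives a unique subgroup of order $p$, hence $G$ is cyclic or generalized quaternion, and you correctly exclude $\Qq_{2^n}$ for $n \geq 3$ via the three order-$4$ subgroups of $\Qq_8$. But when $U \cong \C_p \times \C_p$ you still need a contradiction, and the vague appeal to $\Omega_1(G) \leq U$ does not supply one. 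The clean finishing move is: choose a subgroup $W \leq G$ of order $p^3$ containing $U$ (which exists by part~(2) or by enlarging $U$ inside $\N_G(U)$); every order-$p^2$ subgroup of $W$ is an order-$p^2$ subgroup of $G$ and hence equals $U$, so $W$ has a unique maximal subgroup; a $p$-group with a unique maximal subgroup is cyclic, which forces $U$ cyclic, contradicting $U \cong \C_p \times \C_p$. With that one sentence inserted, your argument for~(3) is complete.
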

\begin{proof}
See~\cite[Chapter 3, Theorem 7.2(a),(c) and Theorem 8.3]{Huppert2025}.
\end{proof}
\begin{lem}\label{lem-p^2 components}
Let $G$ be a $p$-group for some prime $p$ with $|G| \geq p^2$. Suppose that $G$ is cyclic or $G \cong \C_p \times \C_p$.
Then  $\Gamma_{p,1}(G)$ has $p^2$ connected components.
\end{lem}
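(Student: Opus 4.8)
The plan is to show that $\Gamma_{p,1}(G)$ deformation retracts onto a discrete subspace of exactly $p^2$ points, in direct analogy with the treatment of $\Gamma_p(G)$ in the proof of Theorem~\ref{thm-B}. First I would record the crucial structural fact that, in either case, $G$ possesses a \emph{unique} subgroup $W$ of order $p^2$: when $G$ is cyclic this is the standard uniqueness of subgroups of a cyclic $p$-group, and when $G\cong\C_p\times\C_p$ we simply take $W=G$. I then claim that every $H\in\Ss_{p,1}(G)$ contains $W$. Indeed $|H|\geq p^2$, so by Lemma~\ref{basis of p-subgroup}(2) the group $H$ has a subgroup of order $p^2$, and this subgroup is also a subgroup of $G$ of order $p^2$, hence equals $W$ by uniqueness; thus $W\leq H$. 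Since $G$ is abelian, every subgroup $H$ is abelian and every $\varphi\in\Irr(H)$ is linear, so the restriction $\varphi_W$ is again a linear, hence irreducible, character of $W$.

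Next I would define the retraction $r\colon\Gamma_{p,1}(G)\to\mathscr{X}$, $(H,\varphi)\mapsto(W,\varphi_W)$, where $\mathscr{X}=\{(W,\theta)\mid\theta\in\Irr(W)\}$ is the set of vertices over $W$. One checks that $\mathscr{X}$ is an antichain in $\Gamma_{p,1}(G)$ — two vertices $(W,\theta),(W,\theta')$ are comparable only if $\theta=\theta'$ — so $\mathscr{X}$ is a discrete space of $|\Irr(W)|=|W|=p^2$ points. To see that $r$ is a poset map, suppose $(H,\varphi)\leq(K,\psi)$; then $H\leq K$ and, both characters being linear, $[\varphi,\psi_H]\neq0$ forces $\varphi=\psi_H$, whence $\varphi_W=(\psi_H)_W=\psi_W$ because $W\leq H$. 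Thus $r$ is constant on comparable pairs. Writing $i\colon\mathscr{X}\hookrightarrow\Gamma_{p,1}(G)$ for the inclusion, I have $r\circ i=\Id_{\mathscr{X}}$, and for every $(H,\varphi)$ the relation $(W,\varphi_W)\leq(H,\varphi)$ holds (since $W\leq H$ and $[\varphi_W,\varphi_W]\neq0$), i.e. $i\circ r\leq\Id_{\Gamma_{p,1}(G)}$. Exactly as in the proof of Theorem~\ref{thm-B}, this exhibits a homotopy equivalence $\Gamma_{p,1}(G)\simeq\mathscr{X}$, so $\Gamma_{p,1}(G)$ has $p^2$ connected components.

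The only genuinely delicate point is the containment $W\leq H$ for all $H\in\Ss_{p,1}(G)$, which is what makes the single target $W$ available for every vertex and is where the hypothesis on $G$ is used; the remaining verifications are the routine bookkeeping of restricting linear characters and checking the defining inner-product condition of the order on $\Gamma_{p,1}(G)$. I would also note that, alternatively, the upper bound $|\pi_0\Gamma_{p,1}(G)|\leq p^2$ follows immediately from Lemma~\ref{lem-subgroup-connected} applied to $H=W$, and the matching lower bound from the surjectivity of $r$ via Lemma~\ref{lem-component}; but the homotopy-equivalence argument yields the exact count in one stroke and is the approach I would carry out.
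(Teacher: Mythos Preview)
Your proof is correct and follows essentially the same strategy as the paper: both identify the unique subgroup $W$ (the paper's $N$) of order $p^2$ and use the restriction map $(H,\varphi)\mapsto(W,\varphi_W)$ onto the discrete set $\{(W,\theta):\theta\in\Irr(W)\}$ of $p^2$ points. The paper treats the two cases separately and obtains the count via matching upper and lower bounds (exactly your ``alternative'' using Lemmas~\ref{lem-subgroup-connected} and~\ref{lem-component}), whereas you unify the cases and phrase the argument as a homotopy equivalence in the style of the proof of Theorem~\ref{thm-B}; the underlying content is the same.
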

\begin{proof}
If $G\cong \C_p\times \C_p$, obviously, $|\pi_0\Gamma_{p,1}(G)|=p^2$. Assume that $G$ is cyclic. Let $N$ be the unique subgroup of $G$ with order $p^2$. Since $G$ is Abelian, for each $\chi \in \Irr(G)$, $\chi$ is linear and $\chi_N \in \Irr(N)$. Hence we can define a map $\alpha$:
$$\alpha: \Gamma_{p,1}(G) \rightarrow \Gamma_p(N);~ (G,\chi) \mapsto(N,\chi_N).$$
For each $(G_1, \chi_1)\leq (G_2, \chi_2)$,  we have $\chi_1=(\chi_2)_{G_1}$ and $N \leq G_1 \leq G_2$, which implies that $\alpha(G_1, \chi_1)=(N,(\chi_1)_N)=(N,(\chi_2)_N)=\alpha(G_2, \chi_2)$. 
Hence $\alpha$ is a poset map. It follows from Lemma~\ref{lem-component} that $|\pi_0\Gamma_{p,1}(G)|\geq |\pi_0\Gamma_{p}(N)|=p^2.$
Note that $|\pi_0\Gamma_{p,1}(G)|=|\{[(N,\theta)]|\theta \in \Irr(N)\}| \leq p^2$. Hence  $|\pi_0\Gamma_{p,1}(G)|=p^2$.
\end{proof}

Next we should deal with the connected components of $\Gamma_{p,1}(G)$ when $G$ is a $p$-group of order $p^3$.  
\begin{lem}\label{lem-non-abelian group of order p^3}
Let $G$ be a non-abelian group of order $p^3$ for some prime $p$ and write $N=\Phi(G)$. Then for each non-principle irreducible character $\theta$ of $N$, there exists an irreducible character $\chi$ of $G$ such that $\theta^G=p \chi$ and $\chi_N=p\theta$. 
\end{lem}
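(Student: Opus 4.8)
The plan is to exploit the very rigid character theory of non-abelian groups of order $p^3$. First I would pin down the structure of $N=\Phi(G)$. For a non-abelian group $G$ of order $p^3$ the centre $\Z(G)$ has order $p$: if it had order $p^2$ then $G/\Z(G)$ would be cyclic and $G$ abelian, a contradiction. One then checks that $\Phi(G)=[G,G]=\Z(G)$, a cyclic group of order $p$. Thus $N$ is cyclic of order $p$, every irreducible character of $N$ is linear, and the hypothesis says that $\theta$ is one of the $p-1$ non-principal linear characters of $N$.

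Next I would record the character degrees. Since $G/\Phi(G)\cong \C_p\times \C_p$, the group $G$ has exactly $p^2$ linear characters, and because $N=[G,G]$ each of them restricts to the principal character $1_N$ on $N$. A standard count of conjugacy classes shows that the remaining $p-1$ irreducible characters all have degree $p$, their degree squared being at most $|G:\Z(G)|=p^2$. For such a $\chi$, Schur's lemma applied to the central subgroup $N=\Z(G)$ gives $\chi_N=\chi(1)\lambda=p\lambda$ for a single linear character $\lambda$ of $N$, the central character of $\chi$. Moreover $\lambda$ must be non-principal, for otherwise $N\leq\ker\chi$ and $\chi$ would factor through the abelian group $G/N$, forcing $\chi$ to be linear.

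Finally I would analyse the induced character $\theta^G$, which has degree $|G:N|\,\theta(1)=p^2$. By Frobenius reciprocity $[\theta^G,\chi]=[\theta,\chi_N]$, so the linear characters (restricting to $1_N\neq\theta$) contribute nothing, while a degree-$p$ character $\chi$ with central character $\lambda$ contributes exactly $[\theta,p\lambda]=p$ if $\lambda=\theta$ and $0$ otherwise. Hence $\theta^G=p\sum_{\chi}\chi$, the sum running over the degree-$p$ irreducible characters whose central character equals $\theta$. Comparing degrees, if there were $k$ such characters then $p^2=\deg\theta^G=k\cdot p\cdot p$, whence $k=1$. This produces the unique $\chi$ with $\theta^G=p\chi$, and since its central character is $\theta$ we obtain $\chi_N=p\theta$, as required.

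The only real subtlety lies in the degree bookkeeping of the last step: once the structural facts are in place, matching the multiplicity $p$ coming from $\chi_N=p\lambda$ against the degree $p^2$ of $\theta^G$ is precisely what forces exactly one constituent to occur. I do not anticipate any genuine obstacle beyond checking carefully that the central character of each degree-$p$ character is non-principal, which is what guarantees that the $p-1$ non-principal linear characters of $N$ correspond bijectively to the $p-1$ degree-$p$ irreducible characters of $G$.
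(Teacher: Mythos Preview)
Your argument is correct and essentially the same as the paper's: both identify $N=\Phi(G)=\Z(G)=G'$ of order $p$, use the standard count of $p^2$ linear and $p-1$ degree-$p$ irreducible characters, observe that any constituent of $\theta^G$ must be non-linear (since linear characters have $N=G'$ in their kernel), apply centrality to get $\chi_N=p\theta$, and then finish by a degree comparison via Frobenius reciprocity. The only cosmetic difference is that the paper fixes a constituent $\chi$ of $\theta^G$ at the outset and deduces $\theta^G=p\chi$ from $[\theta^G,\chi]=p$ and $\theta^G(1)=p^2$, whereas you first classify all restrictions and then count; the substance is identical.
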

\begin{proof}
Note that $N=\Phi(G)=\Z(G)=G'$ for non-abelian group of order $p^3$.
It follows from \cite[26.6 Theorem]{James2001} that $G$ has exactly $p^2$ linear characters and $p-1$ non-linear irreducible characters of degree $p$.  Let $\chi \in \Irr(G)$ be a character with $[\chi, \theta^G] \neq 0$. Note that $\chi$ is non-linear; otherwise, $N=G' \leq \Ker \chi$, $\chi_N=\lambda 1_N$ for some $\lambda$, which contradicts that $\theta \neq 1_N$ is an irreducible constituent of $\chi_N$.
Hence $\chi$ is non-linear and $\chi(1)=p$.
Since $N=\Z(G)$, by~\cite[2.27 Lemma]{Isaacs1976}, we know that $\chi_N$ is a multiplicity of $\theta$. Comparing the degree of $\chi$ and $\theta$, we have $\chi_N=p\theta$. Note that $[\chi, \theta^G]=[\chi_N,\theta]=p$ and $\theta^G(1)=p^2$. It implies that $\theta^G=p\chi$, as desired.
\end{proof}


\begin{lem}\label{Frattini subgroup of order p}
Let $G$ be a group of order $p^3$ for a prime $p$ with $|\Phi(G)|=p$. Then $\Gamma_{p,1}(G)$ has exactly $p$ connected components. 
\end{lem}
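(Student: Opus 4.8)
The plan is to exploit that $N := \Phi(G)$ is a central subgroup of order $p$ contained in \emph{every} member of $\Ss_{p,1}(G)$, and to build an order-preserving map from $\Gamma_{p,1}(G)$ onto the discrete $p$-point space $\Irr(N)$ whose fibres are each connected. First I would record the structural facts. Since $|G|=p^3$ and $|\Phi(G)|=p$, either $G\cong \C_{p^2}\times \C_p$ or $G$ is non-abelian; in either case $N$ has order $p$, lies in $\Z(G)$, and $G/N\cong \C_p\times\C_p$. As the subgroups of order $p^2$ are precisely the maximal subgroups of $G$ and $\Phi(G)$ is their intersection, we get $N\leq H$, hence $N\leq\Z(H)$, for every $H\in\Ss_{p,1}(G)$. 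Consequently, for each $(H,\varphi)\in\Gamma_{p,1}(G)$, Lemma 2.27 of \cite{Isaacs1976} (as already used in the proof of Theorem~\ref{thm-B}) gives $\varphi_N=\varphi(1)\theta$ for a unique $\theta\in\Irr(N)$, so I can define $g\colon\Gamma_{p,1}(G)\to\Irr(N)$, $(H,\varphi)\mapsto\theta$.

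If $(T,\psi)\leq(H,\varphi)$ then $\psi_N$ is a constituent of $\varphi_N=\varphi(1)\theta$, forcing the two central characters to coincide; thus $g$ is a poset map into the discrete space $\Irr(N)$ of $p$ points. It is surjective, since each $\theta\in\Irr(N)$ is the central character of any irreducible constituent $\chi$ of $\theta^G$ (indeed $[\chi_N,\theta]=[\chi,\theta^G]\neq0$ and $\chi_N=\chi(1)\theta$). By Lemma~\ref{lem-component} we then have $|\pi_0\Gamma_{p,1}(G)|=\sum_{\theta\in\Irr(N)}|\pi_0 g^{-1}(\theta)|$, so it suffices to prove that each fibre $g^{-1}(\theta)$ is connected, which would yield the sum $p$.

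To show a fibre is connected I would first reduce to its \emph{top} vertices $(G,\chi)$. Given $(H,\varphi)$ in the fibre, choosing an irreducible constituent $\chi$ of $\varphi^G$ yields $(H,\varphi)\leq(G,\chi)$, and $g(G,\chi)=\theta$ because $g$ is order-preserving; so every vertex is comparable to some $(G,\chi)$ with $\chi$ over $\theta$. It then remains to link any two such $(G,\chi),(G,\chi')$ inside the fibre, and I would split according to linearity. If both are linear, then $\mu=\chi\overline{\chi'}$ is a linear character trivial on $N$ (both restrict to $\theta$ there), so $\mu$ factors through $G/N\cong\C_p\times\C_p$; if $\chi\neq\chi'$, its kernel $H=\Ker\mu$ is a subgroup of order $p^2$ containing $N$, and $\varphi:=\chi_H=\chi'_H$ lies over $\theta$, giving $(G,\chi)\geq(H,\varphi)\leq(G,\chi')$. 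If instead one of them is non-linear, then $G$ is non-abelian and $\theta\neq 1_N$ (a character over $1_N$ factors through the abelian $G/N$ and is linear); by Lemma~\ref{lem-non-abelian group of order p^3} there is a unique irreducible character of $G$ lying over such a $\theta$, and since any character over $\theta\neq 1_N$ is non-linear, this forces $\chi=\chi'$.

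In both cases $(G,\chi)$ and $(G,\chi')$ lie in the same component, so every $g^{-1}(\theta)$ is connected and the count is exactly $p$. The main obstacle is precisely this fibre-connectivity: one must see that two distinct irreducible characters of $G$ sharing a central character can always be joined through a common subgroup of order $p^2$. This is where the hypothesis $|\Phi(G)|=p$ — equivalently $G/N\cong\C_p\times\C_p$, which guarantees a plentiful supply of index-$p$ kernels through $N$ — and the uniqueness statement encoded in Lemma~\ref{lem-non-abelian group of order p^3} enter decisively; the rest is formal manipulation of central characters and the fibration Lemma~\ref{lem-component}.
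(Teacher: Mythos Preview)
Your proof is correct and shares the paper's opening move---the central-character map $g\colon\Gamma_{p,1}(G)\to\Irr(N)$ together with Lemma~\ref{lem-component}---but handles fibre connectivity along a different axis. The paper splits into the abelian and non-abelian cases: for $G\cong\C_{p^2}\times\C_p$ it fixes a decomposition $G=A\times B$ and builds an explicit chain through $(A,(\chi_i)_A)$ and $(NB,\theta\times 1_B)$ to connect any two $(G,\chi_1),(G,\chi_2)$ lying over the same $\theta$; for $G$ non-abelian it constructs a homotopy inverse $\Gamma_p(N)\to\Gamma_{p,1}(G)$, $(N,\theta_i)\mapsto(G,\chi_i)$, via Lemma~\ref{lem-non-abelian group of order p^3}. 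Your split is instead by linearity of the top characters: when $\chi,\chi'$ are both linear you pass through the order-$p^2$ subgroup $H=\Ker(\chi\overline{\chi'})$, and when one is non-linear you observe that $N=G'$ forces $\theta\neq 1_N$ and then invoke the uniqueness in Lemma~\ref{lem-non-abelian group of order p^3} to get $\chi=\chi'$. Your argument is more uniform---a single kernel trick covers the entire abelian case and the $\theta=1_N$ fibre in the non-abelian case at once, with no explicit chain-building---whereas the paper's non-abelian treatment buys the slightly stronger conclusion that $\Gamma_{p,1}(G)$ and $\Gamma_p(N)$ are actually homotopy equivalent as finite spaces, not merely equinumerous in components.
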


\begin{proof}
Write $N=\Phi(G)$.
For each $(K,\psi)\in \Gamma_{p,1}(G)$, $K=G$ or $K$ is maximal in $G$. Hence $N=\Phi(G) \leq K$. As
$N \leq \Z(G)$, $\psi_N$ is a multiplicity of $\theta$ for some $\theta \in \Irr(N)$.
We can define a map:
$$f \colon \Gamma_{p,1}(G) \rightarrow \Gamma_{p}(N);~~(K,\psi) \mapsto (N, \theta),$$
where $\theta$ is the unique irreducible constituent of $\psi_N$. It is easy to check that $f$ is a surjective poset map. Hence, by Lemma~\ref{lem-component}, $|\pi_0\Gamma_{p,1}(G)| \geq |\pi_0\Gamma_{p}(N)|=p$.

Assume that $G$ is Abelian, then $|\Phi(G)|=p$ implies that $G\cong \C_{p^2} \times \C_p$.  
Write $G=A \times B$ as the direct product of $A \cong \C_{p^2}$ and $B \cong \C_p$. 
Note that $\Gamma_p(N)$ is a discrete space of $p$ points and $\{(N,\theta)\}$ is a connected component of $\Gamma_p(N)$ for each $\theta \in \Irr(N)$.
We will show that $f^{-1}(N,\theta)=\{(K,\psi) \in \Gamma_{p,1}(G) \mid \psi_N=\theta\}$ is exactly one connected component of $\Gamma_{p,1}(G)$. In fact, for $(K_1,\psi_1), (K_1,\psi_2) \in f^{-1}(N,\theta)$, there exist $\chi_1, \chi_2 \in \Irr(G)$ such that $(\chi_i)_{K_i}=\psi_i$ for $i=1,2$.
It implies that $(\chi_i)_N=\theta$ and so $(G,\chi_i) \in f^{-1}(N,\theta)$. Note that $N=\Phi(G)=\Omega_1(A) \leq A$.  Then we will see a chain in $\Gamma_{p,1}(G):$
$$(G,\chi_1) \geq (A,(\chi_1)_A) \leq (G, (\chi_1)_A \times 1_B) \geq (NB,(\chi_1)_N \times 1_B)$$
$$=(NB,\theta \times 1_B)=(NB,(\chi_2)_N \times 1_B) \leq (G, (\chi_2)_A \times 1_B)
\geq (A,(\chi_2)_A) \leq (G,\chi_2).$$
Hence $(K_1,\psi_1) \leq (G,\chi_1)\sim(G,\chi_2) \geq (K_1,\psi_2)$ and $f^{-1}(N, \theta)$ is connected.
It follows from Lemma~\ref{lem-component} that 
$$\pi_0\Gamma_{p,1}(G)=\bigcup_{\theta \in \Irr(N)} \pi_0f^{-1}(N,\theta)$$
and $|\pi_0\Gamma_{p,1}(G)|=p$, as desired.

Now assume that $G$ is non-Abelian. Let $\Irr(N)=\{\theta_0, \theta_1,...,\theta_{p-1}\}$ with $\theta_0=1_N$. It follows from Lemma~\ref{lem-non-abelian group of order p^3} that for each $1 \leq i \leq p-1$, there exists $\chi_i \in \Irr(G)$ such that $\theta_i^G=p\chi_i$ and $(\chi_i)_N=p\theta_i$. 
Write $\chi_0=1_G$.  We can define a map $g: \Gamma_p(N) \rightarrow\Gamma_{p,1}(G)$ with 
$$g(N,\theta_i)=(G,\chi_i)~\text{for}~0 \leq i \leq p-1.$$
Clearly $fg=\Id_{\Gamma_{p,1}(N)}.$ 
On the other hand, for each $(K,\psi) \in \Gamma_{p,1}(G)$,  assume that $\psi_N=\lambda \theta_i$ for some $i$. Then
$$gf(K,\psi)=g(N,\theta_i)=(G,\chi_i).$$
Since $\psi$ is a constitute of $\theta_i^K$, $\psi^G$ is also a constituent of $(\theta_i^K)^G=(\theta_i)^G$.
Note that, by Lemma~\ref{lem-non-abelian group of order p^3}, $\chi_i$ is the unique irreducible constituent of $(\theta_i)^G$. Hence $\chi_i$ is an irreducible constituent of $\psi^G$, which implies that $(K,\psi) \leq (G,\chi_i)$.
Hence we have
$$gf(K,\psi)=(G,\chi_i)\geq (K,\psi),$$
and $gf \simeq \Id_{\Gamma_{p,1}(G)}$.
Therefore $\Gamma_{p,1}(G)$ is homotopy-equivalent to $\Gamma_p(N)$, which implies that
$|\pi_0\Gamma_{p,1}(G)|=|\pi_0\Gamma_p(N)|=p$.
\end{proof}
Let $G=HK$ be the semidirect product of two subgroups $H$ and $K$ with $H \unlhd G$.  For each $\varphi \in \Irr(K)$, the lift of $\varphi$ to $G$ is defined by 
$$\bar{\varphi}: G \rightarrow \mathbb{C};~hk\mapsto \varphi(k).$$
Clearly, $\bar{\varphi}$ is an irreducible character of $G$ with $\bar{\varphi}_K=\varphi$ and $\bar{\varphi}_H=\varphi(1)1_H$.

\begin{lem}\label{lem-semi-direct-p^2}
Let $G=HK$ be the semidirect product of two subgroups $H$ and $K$ with $H \unlhd G$. 
Suppose that $|H|=|K|=p^2$ for some prime $p$. Then $\Gamma_{p,1}(G)$ is connected.
\end{lem}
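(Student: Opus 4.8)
The plan is to anchor the argument on the complement $K$ rather than on the normal subgroup $H$. Since $e=1$ and $|K|=p^{2}=p^{e+1}$, Lemma~\ref{lem-subgroup-connected} (applied with $K$ playing the role of the reference subgroup) tells us that every connected component of $\Gamma_{p,1}(G)$ already contains a pair $(K,\lambda)$ for some $\lambda\in\Irr(K)$. Thus it suffices to show that all the pairs $(K,\lambda)$, as $\lambda$ ranges over $\Irr(K)$, lie in a single component; equivalently, that each of them is joined to one fixed vertex.

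The mechanism I would use for this is the lift construction recorded just before the statement. Because $|K|=p^{2}$, the group $K$ is abelian, so every $\lambda\in\Irr(K)$ is linear and $\lambda(1)=1$. Its lift $\bar\lambda\in\Irr(G)$ then satisfies $\bar\lambda_{K}=\lambda$ and $\bar\lambda_{H}=\lambda(1)1_{H}=1_{H}$. Consequently $[\lambda,(\bar\lambda)_{K}]=1\neq 0$ and $[1_{H},(\bar\lambda)_{H}]=1\neq 0$, which yields the short fence
$$(K,\lambda)\ \leq\ (G,\bar\lambda)\ \geq\ (H,1_{H})$$
in $\Gamma_{p,1}(G)$; here $(H,1_{H})$ is a legitimate vertex since $|H|=p^{2}>p$. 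Hence every $(K,\lambda)$ is connected to the single fixed vertex $(H,1_{H})$.

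Combining the two steps, all pairs $(K,\lambda)$ are mutually connected through $(H,1_{H})$, and since every component of $\Gamma_{p,1}(G)$ meets this family, the poset is connected.

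I expect the main conceptual obstacle to be choosing the right subgroup for the reduction. If one instead fixes $H$ and tries to connect the characters in $\Irr(H)$ directly, Clifford theory only links characters lying in a common $G$-orbit (through a shared $(G,\chi)$ with $\chi$ lying over $\varphi$), and merging distinct $K$-orbits on $\Irr(H)$ — in particular joining a $G$-invariant character of $H$ to $1_{H}$ — is awkward precisely because $H$ and $K$ intersect trivially, so there is no large common subgroup to bridge them. The asymmetry is resolved by anchoring on $K$ instead: the hypothesis that $K$ is a complement (so $H\cap K=1$) is exactly what forces $\bar\lambda_{H}=1_{H}$, after which the difficulty dissolves. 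The two points to verify carefully are the restriction identity $\bar\lambda_{H}=1_{H}$ and the applicability of Lemma~\ref{lem-subgroup-connected} to $K$, which only requires $|K|\geq p^{e+1}$.
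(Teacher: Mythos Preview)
Your proof is correct and, in fact, cleaner than the paper's. Both arguments hinge on the same mechanism: use Lemma~\ref{lem-subgroup-connected} to reduce to showing that all $(K,\lambda)$ lie in one component, and then use the lift $\bar\lambda$ of $\lambda\in\Irr(K)$ along $G/H\cong K$ to produce the fence $(K,\lambda)\leq (G,\bar\lambda)\geq (H,1_{H})$. The difference is in the surrounding scaffolding. The paper argues by contradiction, invoking Theorem~\ref{thm-A} to rule out the presence of $\C_p\times\C_p\times\C_p$ or $\C_{p^2}\times\C_{p^2}$, and then runs a case analysis on the isomorphism types of $H$ and $K$ until only the case $H\cong K\cong\C_{p^2}$ remains, at which point it carries out exactly your lift computation. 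You bypass all of this by observing at the outset that any group of order $p^{2}$ is abelian, so every $\lambda\in\Irr(K)$ is linear and $\bar\lambda_{H}=\lambda(1)1_{H}=1_{H}$ holds without further hypotheses on $K$. The paper's detour through Theorem~\ref{thm-A} is therefore unnecessary for this lemma; your direct argument is both shorter and logically prior (it does not depend on Theorem~\ref{thm-A}).
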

\begin{proof}
Assume that $\Gamma_{p,1}(G)$ is disconnected. Applying Theorem~\ref{thm-A} for $e=1$, $G$ has no subgroups isomorphic to $\C_p \times \C_p \times \C_p$ or $\C_{p^2} \times \C_{p^2}$.

If $K \cong \C_p \times \C_p$, by Lemma~\ref{basis of p-subgroup}(1), we can choose $1 \neq Z \leq H \cap \Z(G)$ with order $p$ and $G$ has a subgroup $ZK \cong \C_p \times \C_p \times \C_p$, which is a contradiction. If $H \cong \C_p \times \C_p$, as $H \unlhd G$,  it implies that 
$K/\C_K(H)$ is isomorphic to a $p$-subgroup $\Aut(H)$. As $|\Aut(H)|_p=|\GL(2,p)|_p=p$,  $|K/\C_K(H)|$ divides $p$. Hence $|\C_K(H)| \geq p$ as $|K|=p^2$. Hence $\C_K(H)H$ has a subgroup isomorphic to $\C_p \times \C_p \times \C_p$, which is also a contradiction.

Hence $H$ and $K$ are both isomorphic to $\C_{p^2}$. For each $\varphi\in \Irr(K)$, 
let $\bar{\varphi}: G \rightarrow \mathbb{C}$ with $\bar{\varphi}(hk)=\varphi(k)$ for $h \in H, k\in K$, which is the lift of $\varphi$ to $G$. It is easy to see that $\bar{\varphi}_K=\varphi$ and
$\bar{\varphi}_H=\varphi(1) 1_H$. Then there exists a chain in $\Gamma_{p,1}(G):$
$$(K,1_K)\leq (HK,1_G) \geq (H,1_H)\leq (HK, \bar{\varphi}) \geq (K,\varphi),$$ which implies that $[(K,1_K)]=[(K,\varphi)]$ for each $\varphi \in \Irr(K)$. Then it follows from Lemma~\ref{lem-subgroup-connected} that 
$$\pi_0\Gamma_{p,1}(G)=\{[(K,\varphi)] \mid \varphi \in \Irr(K)\}=\{[(K,1_K)]\}.$$
Hence $\Gamma_{p,1}(G)$ is connected, which is the final contradiction.
\end{proof}

\begin{proof}[\textbf{\emph{Proof of Theorem~\ref{thm-C}}}]
Let $\mathscr{X}$ be the set of all subgroups of $G$ with order $p^2$ and $I=\bigcap_{H \in \mathscr{X}} H$ by definition. Clearly $|I| \leq p^2$.  Note that if $|I|=p^2$, then $G$ has only one subgroup with order $p^2$. It follows from Lemma~\ref{basis of p-subgroup}(3) and Lemma~\ref{lem-p^2 components} that $G$ has exactly $p^2$ connected components, as desired. Hence we may only show the case that $I=1$ or $|I|=p$.

We first assume that $I=1$ and we will prove that $\Gamma_{p,1}(G)$ is connected.
In this case, we have that $|G|\geq p^3$. By Lemma~\ref{basis of p-subgroup}, we take can a normal subgroup $H$ of $G$ with order $p^2$ and $\Z(G) \cap H \neq 1$. Choose a subgroup $1\neq Z\leq \Z(G) \cap H$ with $|Z|=p$. 

If there exists a subgroup $K \in \mathscr{X}$ such that $H \cap K=1$, then $HK$ is a semidirect product of two subgroups of order $p^2$. It follows from Lemma~\ref{lem-semi-direct-p^2} that
$\Gamma_{p,1}(HK)$ is connected. By Lemma~\ref{lem-subgroup-connected}, $\Gamma_{p,1}(G)$ is connected, as desired.

Now we assume that $H\cap K \neq 1$ for every $K\in \mathscr{X}$. It implies that
$|H\cap K|=p$ for all $K\in \mathscr{X}-\{H\}$. Note that
$$1=I= \bigcap_{K \in \mathscr{X}-\{H\}} H \cap K.$$ 
Then there exist $K_1$ and $K_2$ in $ \mathscr{X}-\{H\}$ such that 
$(K_1\cap H)\cap (K_2\cap H)=1.$
Write $H_1=K_1\cap H$ and $H_2=K_2\cap H$. Then $H_1\cap H_2=1$ and $H=H_1\times H_2$. 
Since $Z \leq H$, either $Z\cap H_1=1$ or $Z\cap H_2=1$ holds.
Without loss of generality, we assume that $Z\cap H_1=1$ and so $H=H_1\times Z$. For each $\varphi \in \Irr(H)$ and $\varphi_{H_1}\in \Irr(H_1)$, there exists a chain
$$(H,1_H)=(H_1 \times H_2,1_{H_1}\times 1_{H_2})\leq (K_2H_1, 1_{K_2}\times 1_{H_1})\geq (K_2,1_{K_2})$$
$$\leq (K_2 H_1,1_{K_2}\times \varphi_{H_1}) \geq (H_2 \times H_1,1_{H_2}\times \varphi_{H_1})=(H,\chi) ~\text{in}~\Gamma_{p,1}(G),$$
where $\chi=1_{H_2}\times \varphi_{H_1}$ and clearly $\chi_{H_1}=\varphi_{H_1}$. Let $\psi \in \Irr(K_1)$ and $\varphi_{H_1}=\psi_{H_1}$ as $K_1$ is Abelian. As $Z\cap K_1=Z\cap K_1 \cap H=1$, so $H K_1=Z\times K_1$. Hence
$$(H,\chi)=(Z\times H_1, \chi_Z \times \chi_{H_1} )=(Z\times H_1, \chi_Z \times \varphi_{H_1} )\leq (Z\times K_1,\chi_Z\times \psi)\geq (K_1,\psi) $$
$$\leq (Z\times K_1, \varphi_Z\times \psi) \geq (Z\times H_1,\varphi_Z\times \varphi_{H_1})=(H,\varphi)~\text{in}~\Gamma_{p,1}(G),$$
which implies $(H,\varphi)\sim (H,\chi) \sim (H,1_H)$ for each $\varphi \in \Irr(H)$ and consequently $\Gamma_{p,1}(H)$ is connected. By Lemma~\ref{lem-subgroup-connected}, $\Gamma_{p,1}(G)$ is connected.

Now we assume that $|I|=p$.
For each $(H,\varphi) \in \Gamma_{p,1}(G)$, as $|H|\geq p^2$, $I \leq H$.
Note that $I \in \Z(G)$ as $I \unlhd G$. Then $\varphi_I=\lambda \theta$ for some integer $\lambda \geq 1$ and some $\theta \in \Irr(I)$. We can consider the map 
$$\alpha: \Gamma_{p,1}(G) \rightarrow \Gamma_p(I);~ (H,\varphi) \mapsto (I, \theta),$$
where $\theta$ is the unique irreducible constitute of $\varphi_I$. It is not difficult to check that $\alpha$ is a surjective map.
Note that $\Gamma_p(I)=\{(I,\theta) \mid \theta \in \Irr(I)\}$ is a discrete space of points $p$, hence it has $p$ connected components. By Lemma~\ref{lem-component}, $|\pi_0\Gamma_{p,1}(G)| \geq |\pi_0\Gamma_p(I)|=p$.

Since $|I|=p$, $G$ has at least two distinct subgroups with order $p^2$.
Let $H,K \in \mathscr{X}$ such that $H \unlhd G$ and $H \cap K=I$.
Write $T=HK$ and $T$ is a group of order $p^3$ by an order argument. Note that $H,K$ are maximal in $T$ and so $\Phi(T) \leq H \cap K=I$.

We claim that $\Phi(T)=I$ is of order $p$. Otherwise, $\Phi(T)=1$ and $T \cong \C_p \times \C_p \times \C_p$. In this case, $T$ has three subgroups with order $p^2$ such that their intersection is trivial, contrary to $|I|=p$. Hence $|\Phi(T)|=p$ and 
Lemma~\ref{Frattini subgroup of order p} shows that $\Gamma_{p,1}(T)$ has exactly $p$ connected components. It follows from
Lemma~\ref{lem-subgroup-connected} that $|\pi_0\Gamma_{p,1}(G)| \leq |\pi_0\Gamma_{p,1}(T)|=p$.
Hence we also get that 
$|\Gamma_{p,1}(G)|=p=|I|$ and
the proof is complete.
\end{proof}

\section{Further remarks and questions}
As mentioned in Theorem~\ref{thm-C}, for a $p$-group $G$, $\Gamma_{p,1}(G)$ is disconnected if and only if the intersection $I$ of all subgroups of $G$ with order $p^2$ is non-trivial. It would be natural to ask the following question:
\begin{question}\label{Q-1}
Determine all $p$-groups $G$ such that $I \neq 1$, where $I$ is the intersection of all subgroups of $G$ with order $p^2$.
\end{question}
As shown in the proof, $G$ has no subgroups isomorphic to $\C_p \times \C_p \times \C_p$ and $\C_{p^2} \times \C_{p^2}$. Hence $G$ is a $p$-rank at most two, which is classified by Blackburn
(See~\cite[Theorem 4.1]{Blackburn1961}) if $p$ is odd and we know little for the case $p=2$.

Motivated by Theorem~\ref{thm-B} ($e=0$) and Theorem~\ref{thm-C} ($e=1$), we propose another interesting question for the case that $e \geq 2$:
\begin{question}\label{Q-2}
Let $G$ be a $p$-group for a prime $p$. Let $e$ be an integer $\geq 2$ and let $I$ be the intersection of all subgroups of $G$ with order $p^{e+1}$. 
Is it true that $\Gamma_{p,e}(G)$ is disconnected if and only if $I \neq 1$? 
\end{question}

Note that $I \unlhd G$. Suppose that $I \neq 1$ and let $Z \leq I \cap \Z(G) \neq 1$ with $|Z|=p$. For each $(K, \psi) \in \Gamma_{p,e}(G)$, $Z \leq I \leq K$ and $\psi_Z$ has the unique irreducible constituent $\theta \in \Irr(Z)$. Hence there is a surjective poset map $\alpha: \Gamma_{p,e}(G) \rightarrow \Gamma_p(Z)$ with $\alpha(K,\psi)=(Z,\theta)$. By Lemma~\ref{lem-component}, $|\pi_0\Gamma_{p,e}(G)| \leq |\Gamma_p(Z)|=p$, which implies that $\Gamma_{p,e}(G)$ is disconnected. Thus it will be interesting to prove the converse direction of Question~\ref{Q-2}.

\bibliographystyle{unsrt}
  \bibliography{bibMHY}

\end{document}